\theoremstyle{plain}
\newtheorem{theorem}{Theorem}[section]
\newtheorem{proposition}[theorem]{Proposition}
\newtheorem{lemma}[theorem]{Lemma}
\newtheorem{remark}[theorem]{Remark}
\newtheorem{definition}[theorem]{Definition}
\newtheorem{notation}[theorem]{Notation}
\newtheorem{main theorem}[theorem]{Main Theorem}
\newlength\savewidth
\newcommand{\ZZ}{\mathbb{Z}}
\newcommand{\QQ}{\mathbb{Q}}
\newcommand{\RR}{\mathbb{R}}
\newcommand{\QQQ}{\hat{\mathbb{Q}}}
\newcommand{\PConway}{\mbox{\boldmath$S$}}
\newcommand{\rtangle}[1]{(B^3,t({#1}))}
\newcommand{\svert}{\,|\,}
\newcommand{\llangle}{\langle\langle}
\newcommand{\rrangle}{\rangle\rangle}
\newcommand{\lp}{(\hskip -0.07cm (}
\newcommand{\rp}{)\hskip -0.07cm )}
\renewcommand\subsection{\@startsection{subsection}{2}{0mm}
    {-10.5dd plus-8pt minus-4pt}{10.5dd}
     {\normalsize\upshape}}
\begin{document}

\title[A family of two generator non-Hopfian groups]
{A family of two generator non-Hopfian groups}
\author{Donghi Lee}
\address{Department of Mathematics\\
Pusan National University \\
San-30 Jangjeon-Dong, Geumjung-Gu, Pusan, 609-735, Korea}
\email{donghi@pusan.ac.kr}

\author{Makoto Sakuma}
\address{Department of Mathematics\\
Graduate School of Science\\
Hiroshima University\\
Higashi-Hiroshima, 739-8526, Japan}
\email{sakuma@math.sci.hiroshima-u.ac.jp}

\subjclass[2010]{Primary 20F06, 57M25 \\
\indent {The first author was supported by Basic Science Research Program
through the National Research Foundation of Korea(NRF) funded
by the Ministry of Education, Science and Technology(2014R1A1A2054890).
The second author was supported by JSPS Grants-in-Aid 15H03620.}}


\begin{abstract}
We construct $2$-generator non-Hopfian groups
$G_m, m=3, 4, 5, \dots$,
where each $G_m$ has a specific presentation
$G_m=\langle a, b \, | \, u_{r_{m,0}}=u_{r_{m,1}}=u_{r_{m,2}}= \cdots =1 \rangle$
which satisfies small cancellation conditions $C(4)$ and $T(4)$.
Here, $u_{r_{m,i}}$ is the single relator of the upper presentation
of the $2$-bridge link group
of slope $r_{m,i}$,
where $r_{m,0}=[m+1,m,m]$ and
$r_{m,i}=[m+1,m-1,(i-1)\langle m \rangle,m+1,m]$ in continued fraction expansion
for every integer $i \ge 1$.
\end{abstract}

\maketitle


\section{Introduction}
\label{sec:introduction}

Recall that a group $G$ is called {\it Hopfian} if every epimorphism $G \rightarrow G$
is an automorphism.
The non-Hopfian property of finitely generated groups
has a close connection with the non-residual finiteness.
In fact, the classical work due to Mal'cev~\cite{Mal} shows that
every finitely generated non-Hopfian group is
non-residually finite.
One of the hardest open problems about hyperbolic groups is
whether or not every hyperbolic group is residually finite.
An important progress on this problem was given by
Sela~\cite{Sela} asserting that
every torsion-free hyperbolic group is Hopfian.
In 2007, Osin~\cite{Osin} proved that this problem is equivalent to the question
on whether or not a group $G$ is residually finite
if $G$ is hyperbolic relative to a finite collection of residually finite subgroups.
The notion of relatively hyperbolic groups is
an important generalization of hyperbolic groups in geometric group theory
originally introduced by Gromov \cite{Gromov}
(cf. \cite{Bowditch}, \cite{Farb}, \cite{Osin2}).
Motivating examples for this generalization
include the fundamental groups of non-compact hyperbolic manifolds
of finite volume.
In particular, every $2$-bridge link complement
except for a torus link is a hyperbolic manifold with cusps,
so its fundamental group, that is, the $2$-bridge link group,
is hyperbolic relative to its peripheral subgroups
although it is not a hyperbolic group.
It is known by Groves~\cite{Groves} that a finitely generated
torsion-free group is Hopfian,
if it is hyperbolic relative to free abelian subgroups.
It is also proved by Reinfeldt and Weidmann~\cite{Rein, Rein-Weid} that
every hyperbolic group, possibly with torsion,
is Hopfian.
In addition, based on this result,
Coulon and Guirardel~\cite{Coul_Guir}
proved that every lacunary hyperbolic group, which is characterized as
a direct limit of hyperbolic groups with a certain radii condition,
is also Hopfian.

As for small cancellation groups,
it is known that
if a group has a finite presentation which satisfies
small cancellation conditions either $C'(1/6)$ or both $C'(1/4)$ and $T(4)$,
then it is hyperbolic (see~\cite{Strebel}).
Wise~\cite{Wise2} also proved that
every finite $C'(1/6)$-small cancellation presentation defines
a residually finite group.

Historically, not many have been known examples of finitely generated
non-Hopfian groups with specific presentations.
The earliest such example was found by Neumann~\cite{Neum} in 1950
as follows:
$\langle a,b \svert e_2=e_3=\cdots=1 \rangle$,
where $e_i= a^{-1}b^{-1}ab^{-i}ab^{-1}a^{-1}b^ia^{-1}bab^{-i}aba^{-1}b^i$
for every integer $i \ge 2$.
Soon after, the first non-Hopfian group with finite presentation
was discovered by Higman~\cite{Higman} as follows:
$\langle a, s, t \svert a^s=a^2, \ a^t=a^2 \rangle$.
Also a non-Hopfian group with the simplest presentation up to now
was produced by Baumslag and Solitar~\cite{Baum_Soli} as follows:
$\langle a, t \svert (a^2)^t = a^3 \rangle$.
Many other non-Hopfian groups with specific finite presentations
have been obtained by generalizing Higman's group or Baumslag-Solitar's group
(see, for instance, \cite{Sapir_Wise}, \cite{Wise}).
Another notable non-Hopfian group was obtained
by Ivanov and Storozhev~\cite{Ivanov} in 2005.
They constructed a family of finitely generated,
but not finitely presented,
non-Hopfian relatively free groups with direct limits of hyperbolic groups,
although the defining relations of their group presentations
are not explicitly described in terms of generators.

Motivated by this background, we construct non-Hopfian groups
by using hyperbolic $2$-bridge link groups.
In more detail, we construct a family of
$2$-generator non-Hopfian groups
each of which has the form
$G=\langle a, b \, | \, u_{r_0}=u_{r_1}=u_{r_2}=\cdots=1 \rangle$
satisfying small cancellation conditions $C(4)$ and $T(4)$,
where $u_{r_i}$ is the single relator of the upper presentation
of the link group of the $2$-bridge link
of slope $r_i$
for every $i=0,1,2, \dots$.
Here, the rational numbers $r_i$ may be parametrized by $i \ge 0$,
and there is an explicit formula to express $u_{r_i}$ in terms of $a$ and $b$.
To parametrize the rational numbers $r_i$, we express $r_i$
in continued fraction expansion.
Note that every rational number $0<s\le 1$ has a unique continued fraction expansion
such that
\begin{center}
\begin{picture}(230,70)
\put(0,48){$\displaystyle{
s=
\cfrac{1}{m_1+
\cfrac{1}{ \raisebox{-5pt}[0pt][0pt]{$m_2 \, + \, $}
\raisebox{-10pt}[0pt][0pt]{$\, \ddots \ $}
\raisebox{-12pt}[0pt][0pt]{$+ \, \cfrac{1}{m_k}$}
}} \
=:[m_1,m_2, \dots,m_k],}$}
\end{picture}
\end{center}
where $k \ge 1$,
$(m_1, \dots, m_k) \in (\mathbb{Z}_+)^{k}$ and
$m_k \ge 2$ unless $k=1$.

The main result of the present paper is the following,
whose proof is contained in in Section~\ref{sec:proof}.

\begin{theorem}
\label{thm:simplest_case}
Let $r_0=[4,3,3]$, and let
$r_i=[4,2,(i-1)\langle 3 \rangle,4,3]$ for every integer $i \ge 1$.
Then the group presentation
$G=\langle a, b \svert u_{r_0}=u_{r_1}=u_{r_2}=\cdots=1 \rangle$
satisfies small cancellation conditions $C(4)$ and $T(4)$,
and $G$ is non-Hopfian.
\end{theorem}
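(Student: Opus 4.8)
The statement makes two assertions, and the plan is to treat them in turn, using the small cancellation conclusion of the first as the essential tool for the second.

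For the conditions $C(4)$ and $T(4)$, the plan is to invoke the combinatorial description of pieces between the relators $u_r$ of $2$-bridge link groups in terms of their continued fraction expansions, as developed in the authors' earlier work. A piece is a common (cyclic) subword shared by two of the relators $u_{r_i}^{\pm 1}$, and for these relators the maximal pieces are controlled by the partial quotients of the $r_i$. Concretely, I would (i) record the explicit form of $u_{r_i}$ dictated by $r_0=[4,3,3]$ and $r_i=[4,2,(i-1)\langle 3 \rangle,4,3]$, (ii) bound the length of every piece, and (iii) verify that no relator is a product of fewer than four pieces ($C(4)$) and that no reduced "triangle" of three relators occurs ($T(4)$). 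Because consecutive slopes $r_i,r_{i+1}$ differ only by the insertion of a single partial quotient $3$ deep inside the expansion, while $r_0$ has a distinct shape, the overlaps stay short; this is exactly why these particular slopes are chosen, and the verification becomes a finite combinatorial check once the piece criterion is in hand.

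For non-Hopficity, the plan is to produce an epimorphism $\phi:G\to G$ with nontrivial kernel by exploiting the self-similar, shift-like structure of the family $\{r_i\}$. Writing $F=F(a,b)$ and $N=\langle\langle u_{r_0},u_{r_1},\dots\rangle\rangle$ so that $G=F/N$, I would construct a substitution $\psi:F\to F$ (realizing the index shift, and which I expect to be an automorphism of $F$) with the property that $\psi(u_{r_i})$ is, up to cyclic conjugation and inversion, the relator $u_{r_{i+1}}$ for every $i\ge 0$. Granting this, $\psi(N)=N':=\langle\langle u_{r_1},u_{r_2},\dots\rangle\rangle\subseteq N$, so $\psi$ descends to an isomorphism $\bar\psi:G=F/N\xrightarrow{\ \cong\ }F/N'$, and composing with the natural quotient $F/N'\twoheadrightarrow F/N=G$ yields the desired endomorphism $\phi$, which is onto. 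Its kernel is $N/N'$, which is nontrivial precisely when $u_{r_0}$ is \emph{not} a consequence of the remaining relators $u_{r_1},u_{r_2},\dots$. This last point is where the first half of the theorem is used: since $G':=\langle a,b \svert u_{r_1}=u_{r_2}=\cdots=1\rangle$ still satisfies $C(4)$ and $T(4)$, the Greendlinger-type spelling theorem available for such presentations shows that the reduced word $u_{r_0}$, which shares only short pieces with the $u_{r_i}$ ($i\ge 1$), cannot equal $1$ in $G'$; hence $N\ne N'$ and $\phi$ is not injective.

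The main obstacle is the shift lemma $\psi(u_{r_i})\sim u_{r_{i+1}}$. Establishing it requires the explicit word formula expressing $u_r$ from the continued fraction expansion of $r$ and a careful tracking of how the insertion of one partial quotient $3$ transforms the word; the delicate compatibility is that a \emph{single} substitution must simultaneously realize the exceptional step $u_{r_0}\mapsto u_{r_1}$ (where the expansions have different shapes) and the generic step $u_{r_i}\mapsto u_{r_{i+1}}$ for $i\ge 1$. Verifying that $\psi$ is genuinely an automorphism of $F$, so that $\bar\psi$ is an isomorphism and $\phi$ is onto, is a secondary but necessary check. Once the shift lemma and the piece estimates are secured, the non-Hopficity follows from the normal-closure bookkeeping above together with the small cancellation certificate that $u_{r_0}\ne 1$ modulo the other relators.
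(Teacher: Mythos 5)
Your high-level intuition---a substitution realizing the index shift $u_{r_i}\mapsto u_{r_{i+1}}$, combined with a small-cancellation certificate of non-triviality---is indeed the skeleton of the paper's proof, but the precise form in which you state the shift lemma is false, and the whole second half of your argument rests on it. You require an automorphism $\psi$ of $F=F(a,b)$ with $\psi(u_{r_i})$ freely conjugate to $u_{r_{i+1}}^{\pm 1}$ for \emph{all} $i\ge 0$. No such $\psi$ exists. Quick obstruction: by the explicit formula for $u_{q/p}$, the relator abelianizes to $a+(-1)^q b\neq 0$ in $H_1(F)\cong\mathbb{Z}^2$ when $p$ is odd, and to $0$ when $p$ is even (it is then the commutator-shaped word $a\hat{u}a^{-1}\hat{u}^{-1}$). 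Now $r_1=[4,2,4,3]=29/129$ has odd denominator while $r_2=[4,2,3,4,3]=97/430$ has even denominator, so $u_{r_1}$ is nonzero in $H_1(F)$ whereas every conjugate of $u_{r_2}^{\pm 1}$ is zero there; an automorphism induces an element of $\mathrm{GL}_2(\mathbb{Z})$ and cannot kill a nonzero class. (More structurally, such a $\psi$ would induce isomorphisms $G(K(r_i))\cong G(K(r_{i+1}))$ of $2$-bridge link groups of essentially different slopes, which are not isomorphic.) So the shift can only hold \emph{modulo relators}, never in $F$; and once $\psi$ fails to be an automorphism, your chain $\psi(N)=N'$, $\bar\psi\colon F/N\xrightarrow{\cong}F/N'$, $\ker\phi\cong N/N'$ collapses, including the final step where you certify $u_{r_0}\neq 1$ in $\langle a,b\svert u_{r_1}=u_{r_2}=\cdots=1\rangle$.

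The paper's repair is exactly at this point. It uses the non-surjective endomorphism $f(a)=\bar{X}$, $f(b)=\bar{b}$, where $X$ is the alternating word with $S(X)=(4,4,4,5,4,4)$, and proves the shift identities $(f(u_{r_i}))=(u_{r_{i+1}}^{\pm1})$ only in $G_0=\langle a,b\svert u_{r_0}=1\rangle$ (Lemmas~\ref{lem:claim2} and \ref{lem:claim3}), by $CS$-sequence computations in which subwords whose $CS$-sequence equals $CS(r_0)$ are deleted. This weaker statement still yields $f(N)\subseteq N$ and hence an induced endomorphism $\hat f$ of $G$, but the two things your automorphism assumption gave for free must then be proved by hand: surjectivity of $\hat f$, established by showing $a$ lies in the image of $f$ modulo $u_{r_0}$ (Lemma~\ref{lem:claim1}: $f(w)=_{G_0}\bar{a}$ for the word $w$ with $S(w)=(3,3,3,4,3,3)$), and non-injectivity, witnessed not by $u_{r_0}\notin N'$ but by an explicit word $u_s$ with $s=[3,3,4]$ satisfying $\hat f(u_s)=1$ yet $u_s\neq_G 1$ (Lemmas~\ref{lem:claim4} and \ref{lem:claim6}), the latter via the Greendlinger-type argument you envisage: $CS(s)$ consists of $3$'s and $4$'s, while any boundary word of a reduced diagram over this $C(4)$--$T(4)$ presentation must contain a run of length $5$. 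On the small-cancellation half, note also that $T(4)$ is immediate from cyclic alternation, and that the real content of $C(4)$ is not that ``overlaps stay short because the expansions differ'' and is not a finite check (there are infinitely many relators): it is the uniform cross-relator statement that any common subword of $(u_{r_i}^{\pm1})$ and $(u_{r_j}^{\pm1})$ with $i\neq j$ is a piece, proved by induction on continued fraction expansions (Lemma~\ref{lem:useful} together with Claim~1 in the proof of Lemma~\ref{lem:claim5}).
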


Here, the symbol ``$(i-1)\langle 3 \rangle$'' represents $i-1$ successive $3$'s if $i-1\ge 1$,
whereas ``$0 \langle 3 \rangle$'' means that $3$ does not occur in that place,
so that $r_1=[4,2,0 \langle 3 \rangle,4,3]=[4,2,4,3]$.

\begin{remark}
{\rm
(1) Once we allow the components of a continued fraction expansion
to be ``$-$'',
meaning that the two integers immediately before and after $-$ are added
to form one component,
$r_i$'s in Theorem~\ref{thm:simplest_case} can be parametrized including $i=0$
as $r_i=[4,i\langle 2,1,- \rangle,3,3]$ for every $i=0,1,2, \dots$.

(2) If we express the rational number $r_i$
in Theorem~\ref{thm:simplest_case} as $q_i/p_i$,
where $p_i$ and $q_i$ are relatively prime positive integers,
then $|u_{r_i}|=2p_i$ (see Section~\ref{subsec:presentation}).
A simple computation shows that
the inequality $3< p_{i+1}/p_i <4$ holds for every $i=0,1,2,\dots$,
so that
the length $|u_{r_i}|$ of the word $u_{r_i}$ satisfies the inequality
$c \cdot 3^i < |u_{r_i}| < c \cdot 4^{i}$
for every integer $i \ge 1$, where $c=|u_{r_0}|$.
}
\end{remark}

By looking at the proof of Theorem~\ref{thm:simplest_case}
in Section~\ref{sec:proof},
it is not hard to see that a similar result holds not only for $r_0=[4,3,3]$
but also for $r_0=[m+1,m,m]$ with $m$ being any integer
greater than $3$.
Thus we only state its general form without a detailed proof.

\begin{theorem}
\label{thm:general_case}
Suppose that $m$ is an integer with $m \ge 3$.
Let $r_0=[m+1,m,m]$, and let
$r_i=[m+1,m-1,(i-1)\langle m \rangle,m+1,m]$ for every integer $i \ge 1$.
Then the group presentation
$G=\langle a, b \svert u_{r_0}=u_{r_1}=u_{r_2}=\cdots=1 \rangle$
satisfies small cancellation conditions $C(4)$ and $T(4)$,
and $G$ is non-Hopfian.
\end{theorem}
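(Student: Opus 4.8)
The plan is to run the argument of Theorem~\ref{thm:simplest_case} with $m$ kept as a free parameter, since the role played by the specific values $m+1=4$, $m-1=2$, $m=3$ in the case $r_0=[4,3,3]$ is entirely structural. Thus the proof splits into two essentially independent tasks: first, verifying that the presentation $G=\langle a,b\svert u_{r_0}=u_{r_1}=\cdots=1\rangle$ satisfies $C(4)$ and $T(4)$; second, exhibiting an epimorphism $\phi\colon G\to G$ with nontrivial kernel.

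For the small cancellation conditions, I would begin from the explicit description of the word $u_r$ in terms of the continued fraction expansion of $r$ recalled in Section~\ref{subsec:presentation}, so that each relator $u_{r_i}$ is decomposed into blocks indexed by the entries $m+1,m-1,m,\dots$ of $r_i$. The pieces --- maximal common subwords shared between two distinct relators, or between a relator and a cyclic conjugate or inverse of itself --- are then read off from coincidences among these blocks. The crucial point is that the \emph{pattern} of blocks (which entry attains the minimum, and how the repeated $m$'s are arranged) is the same for every $m\ge3$; only the lengths of the individual blocks scale with $m$, and the condition $m\ge3$ guarantees all entries are at least $2$ with a single near-minimal entry $m-1$ occurring once per relator. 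Consequently the counting that shows no $u_{r_i}$ is a product of fewer than four pieces ($C(4)$), and that no interior vertex of degree three can occur in a reduced diagram ($T(4)$), is word-for-word the same as for $m=3$. I would carry the $m=3$ verification through verbatim, tracking $m$ as a symbol.

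For the non-Hopfian property, I would construct $\phi$ from the self-similarity of the slopes: passing from $r_i$ to $r_{i+1}$ simply inserts one more $m$ into the block $(i-1)\langle m\rangle$, so I expect an explicit substitution $a\mapsto A$, $b\mapsto B$ (words in $a,b$, read off from the $m=3$ case with $m$ as a parameter) under which $\phi(u_{r_i})$ is a cyclic conjugate of $u_{r_{i+1}}^{\pm1}$, hence trivial in $G$; this makes $\phi$ a well-defined endomorphism. Surjectivity then follows, as in the Baumslag--Solitar situation, by using the relation $u_{r_0}=1$ to recover $a$ and $b$ from $A$ and $B$, so that $\langle A,B\rangle=G$.

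The hard part will be the non-injectivity: I must produce an explicit word $w$ with $\phi(w)=1$ in $G$ and then prove $w\neq1$ in $G$. The former is a direct computation with the substitution, but the latter requires the full force of the $C(4)$--$T(4)$ machinery, namely the solution of the word problem via Dehn's algorithm together with a Greendlinger-type analysis ruling out any reduced van Kampen diagram whose boundary label is $w$. This is precisely where the delicate choice of the slopes $r_i$ enters, and where one must confirm that no unexpected cancellation or diagram appears for general $m$. Since every combinatorial input to this step depends on $m$ only through the block pattern already shown to be $m$-independent, the argument of Theorem~\ref{thm:simplest_case} applies mutatis mutandis, completing the proof.
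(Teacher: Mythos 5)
Your proposal coincides with the paper's own treatment: the paper gives no separate proof of Theorem~\ref{thm:general_case}, but obtains it by observing that the proof of Theorem~\ref{thm:simplest_case} --- the endomorphism induced by $a \mapsto \bar{X}$, $b \mapsto \bar{b}$, which carries each $(u_{r_i})$ to $(u_{r_{i+1}}^{\pm 1})$ modulo the single relation $u_{r_0}=1$, is surjective because that relation lets one recover $a$, and fails to be injective because it kills a word $u_s$ (with $s=[m,m,m+1]$) whose nontriviality in $G$ follows from the $C(4)$--$T(4)$ van Kampen diagram analysis --- carries over with $m$ kept as a symbol, the hypothesis $m \ge 3$ being exactly what keeps every continued-fraction entry after the first at least $2$ so that the lemmas of Section~\ref{sec:preliminaries} apply. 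Your outline reproduces this architecture step for step, so it is essentially the same proof.
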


The present paper is organized as follows.
In Section~\ref{sec:preliminaries}, we recall
the upper presentation of a $2$-bridge link group, and basic facts
established in \cite{lee_sakuma} concerning the upper presentations.
We also recall key facts from \cite{lee_sakuma} obtained by applying small cancellation theory to
the upper presentations.
Section~\ref{sec:proof} is devoted to the proof of the main result
(Theorem~\ref{thm:simplest_case}).

\section{Preliminaries}
\label{sec:preliminaries}

\subsection{Upper presentations of $2$-bridge link groups}
\label{subsec:presentation}

We recall some notation in \cite{lee_sakuma}.
The {\it Conway sphere} $\PConway$ is the 4-times punctured sphere
which is obtained as the quotient of $\RR^2-\ZZ^2$
by the group generated by the $\pi$-rotations around
the points in $\ZZ^2$.
For each $s \in \QQQ:=\QQ\cup\{\infty\}$,
let $\alpha_s$ be the simple loop in $\PConway$
obtained as the projection of a line in $\RR^2-\ZZ^2$
of slope $s$.
We call $s$ the {\it slope} of the simple loop $\alpha_s$.

For each $r\in \QQQ$,
the {\it $2$-bridge link $K(r)$ of slope $r$}
is the sum of the rational tangle
$\rtangle{\infty}$ of slope $\infty$ and
the rational tangle $\rtangle{r}$ of slope $r$.
Recall that $\partial(B^3-t(\infty))$ and $\partial(B^3-t(r))$
are identified with $\PConway$
so that $\alpha_{\infty}$ and $\alpha_r$
bound disks in $B^3-t(\infty)$ and $B^3-t(r)$, respectively.
By van-Kampen's theorem, the link group $G(K(r))=\pi_1(S^3-K(r))$ is obtained as follows:
\[
G(K(r))=\pi_1(S^3-K(r))
\cong \pi_1(\PConway)/ \llangle\alpha_{\infty},\alpha_r\rrangle
\cong \pi_1(B^3-t(\infty))/\llangle\alpha_r\rrangle.
\]
Let $\{a,b\}$ be the standard meridian generator pair of $\pi_1(B^3-t(\infty), x_0)$
as described in \cite[Section~3]{lee_sakuma}.
Then $\pi_1(B^3-t(\infty))$ is identified with
the free group $F(a,b)$ with basis $\{a, b\}$.
For the rational number $r=q/p$, where $p$ and $q$ are relatively prime positive integers,
let $u_r$ be the word in $\{a,b\}$ obtained as follows.
Set $\epsilon_i = (-1)^{\lfloor iq/p \rfloor}$,
where $\lfloor x \rfloor$ is the greatest integer not exceeding $x$.
\begin{enumerate}[\indent \rm (1)]
\item If $p$ is odd, then
\[u_{q/p}=a\hat{u}_{q/p}b^{(-1)^q}\hat{u}_{q/p}^{-1},\]
where
$\hat{u}_{q/p} = b^{\epsilon_1} a^{\epsilon_2} \cdots b^{\epsilon_{p-2}} a^{\epsilon_{p-1}}$.
\item If $p$ is even, then
\[u_{q/p}=a\hat{u}_{q/p}a^{-1}\hat{u}_{q/p}^{-1},\]
where
$\hat{u}_{q/p} = b^{\epsilon_1} a^{\epsilon_2} \cdots a^{\epsilon_{p-2}} b^{\epsilon_{p-1}}$.
\end{enumerate}
Then $u_r\in F(a,b)\cong\pi_1(B^3-t(\infty))$
is represented by the simple loop $\alpha_r$,
and we obtain the following two-generator and one-relator presentation
of a $2$-bridge link groups:
\[
G(K(r)) \cong\pi_1(B^3-t(\infty))/\llangle \alpha_r\rrangle
\cong \langle a, b \svert u_r \rangle.
\]
This presentation is called the {\it upper presentation} of a $2$-bridge link group.

\subsection{Basic facts concerning the upper presentations}

Throughout this paper,
a {\it cyclic word} is defined to be the set of all cyclic permutations of a
cyclically reduced word. By $(v)$ we denote the cyclic word associated with a
cyclically reduced word $v$.
Also the symbol ``$\equiv$'' denotes the {\it letter-by-letter equality}
between two words or between two cyclic words.
Now we recall definitions and basic facts from \cite{lee_sakuma}
which are needed in the proof of Theorem~\ref{thm:simplest_case}
in Section~\ref{sec:proof}.

\begin{definition}
\label{def:alternating}
{\rm (1) Let $v$ be a reduced word in
$\{a,b\}$. Decompose $v$ into
\[
v \equiv v_1 v_2 \cdots v_t,
\]
where, for each $i=1, \dots, t-1$, $v_i$ is a positive (resp., negative) subword
(that is, all letters in $v_i$ have positive (resp., negative) exponents),
and $v_{i+1}$ is a negative (resp., positive) subword.
Then the sequence of positive integers
$S(v):=(|v_1|, |v_2|, \dots, |v_t|)$ is called the {\it $S$-sequence of $v$}.

(2) Let $v$ be a cyclically reduced word in
$\{a, b\}$. Decompose the cyclic word $(v)$ into
\[
(v) \equiv (v_1 v_2 \cdots v_t),
\]
where $v_i$ is a positive (resp., negative) subword,
and $v_{i+1}$ is a negative (resp., positive) subword (taking
subindices modulo $t$). Then the {\it cyclic} sequence of positive integers
$CS(v):=\lp |v_1|, |v_2|, \dots, |v_t| \rp$ is called
the {\it $CS$-sequence of $(v)$}.
Here the double parentheses denote that the sequence is considered modulo
cyclic permutations.
}
\end{definition}

\begin{definition}
\label{def4.1(3)}
{\rm
For a rational number $r$ with $0<r\le 1$,
let $u_r$ be the word defined in the beginning of this section.
Then the symbol $CS(r)$ denotes the
$CS$-sequence $CS(u_r)$ of $(u_r)$, which is called
the {\it $CS$-sequence of slope $r$}.}
\end{definition}

A reduced word $w$ in $\{a,b\}$ is said to be {\it alternating}
if $a^{\pm 1}$ and $b^{\pm 1}$ appear in $w$ alternately,
to be precise, neither $a^{\pm2}$ nor $b^{\pm2}$ appears in $w$.
Also a cyclically reduced word $w$ in $\{a,b\}$
is said to be {\it cyclically alternating},
i.e., all the cyclic permutations of $w$ are alternating.
In particular, $u_r$ is a cyclically alternating word in $\{a,b\}$.
Note that every alternating word $w$ in $\{a,b\}$
is determined by the sequence $S(w)$
and the initial letter (with exponent) of $w$.
Note also that if $w$ is a cyclically alternating word in $\{a,b\}$
such that $CS(w)=CS(r)$, then either $(w) \equiv (u_r)$
or $(w) \equiv (u_r^{-1})$ as cyclic words.

In the remainder of this section, we suppose that $r$ is a rational number
with $0<r\le1$, and write $r$ as a continued fraction expansion
$r=[m_1,m_2, \dots,m_k]$,
where $k \ge 1$, $(m_1, \dots, m_k) \in (\mathbb{Z}_+)^k$ and
$m_k \ge 2$ unless $k=1$.
Note from \cite{lee_sakuma} that
if $k \ge 2$, then some properties of $CS(r)$
differ according to $m_2=1$ or $m_2 \ge 2$.
For brevity, we write $m$ for $m_1$.

\begin{lemma} [{\cite[Proposition~4.3]{lee_sakuma}}]
\label{lem:properties}
For the rational number $r=[m_1,m_2, \dots,m_k]$
satisfying that $m_2 \ge 2$ if $k \ge 2$,
the following hold.
\begin{enumerate}[\indent \rm (1)]
\item Suppose $k=1$, i.e., $r=1/m$.
Then $CS(r)=\lp m,m \rp$.

\item Suppose $k\ge 2$. Then each term of $CS(r)$ is either $m$ or $m+1$.
Moreover, no two consecutive terms of $CS(r)$ can be $(m+1, m+1)$,
so there is a cyclic sequence of positive integers $\lp t_1,t_2,\dots,t_s \rp$ such that
\[
CS(r)=\lp m+1, t_1\langle m\rangle, m+1, t_2\langle m\rangle,
\dots,m+1, t_s\langle m\rangle \rp.
\]
Here, the symbol ``$t_i\langle m\rangle$'' represents $t_i$ successive $m$'s.
\end{enumerate}
\end{lemma}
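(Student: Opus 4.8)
The plan is to reduce everything to the arithmetic of the exponent sequence $\epsilon_1,\dots,\epsilon_{p-1}$, where $r=q/p$ and $\epsilon_i=(-1)^{\lfloor iq/p\rfloor}$. Since $u_r$ is cyclically alternating in $a$ and $b$, each term of $CS(r)$ is precisely the length of a maximal run of consecutive equal signs in the cyclic sign sequence read off from $(u_r)$. Setting $\epsilon_0:=+1$ and using the symmetry $\lfloor (p-i)q/p\rfloor=q-1-\lfloor iq/p\rfloor$ (valid because $iq/p\notin\ZZ$ for $0<i<p$), one checks in both parities of $p$ that the tail $\hat u_{q/p}^{-1}$ contributes the signs $(-1)^q\epsilon_1,\dots,(-1)^q\epsilon_{p-1}$, while the middle letter contributes $(-1)^q=(-1)^q\epsilon_0$. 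Hence the full cyclic sign sequence is two copies of $X:=(\epsilon_0,\epsilon_1,\dots,\epsilon_{p-1})$, the second multiplied by $(-1)^q$. So the whole problem reduces to understanding the run lengths of the single length-$p$ block $X$, together with how the two copies glue at their junctions.

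Part (1) is then a one-line computation: for $r=1/m$ we have $q=1$, so $\epsilon_i=+1$ for all $1\le i\le m-1$; thus $X=(+1,\dots,+1)$ has a single run of length $m$, and the two oppositely-signed copies give $CS(r)=\lp m,m\rp$.

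For part (2) the heart is the run structure of $X$. The sign $\epsilon_i$ flips exactly when $f(i):=\lfloor iq/p\rfloor$ increments, and since $0<q<p$ these increments are by $1$ and occur at $i=\lceil jp/q\rceil$. Consequently all run lengths of $X$ are of the uniform shape $\lceil (j+1)p/q\rceil-\lceil jp/q\rceil$ for $j=0,\dots,q-1$ (with $\lceil 0\rceil=0$ and $\lceil q\cdot p/q\rceil=p$). Because $m_2\ge 2$ forces $p/q=m+r'$ with $r'=[m_2,\dots,m_k]\in(0,1/2]$, I would invoke the three-distance theorem (equivalently, a direct floor estimate) to conclude that every such gap equals $\lfloor p/q\rfloor=m$ or $\lceil p/q\rceil=m+1$; this yields that each term of $CS(r)$ is $m$ or $m+1$. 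The same mechanical/Sturmian description shows that the long gaps (value $m+1$) occur with frequency exactly $r'\le 1/2$ and are therefore isolated, i.e.\ no two consecutive gaps of $X$ equal $m+1$. Rewriting the resulting $\{m,m+1\}$-sequence in the normal form $\lp m+1,t_1\langle m\rangle,\dots,m+1,t_s\langle m\rangle\rp$ is then purely formal.

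The step I expect to be the main obstacle is the boundary/junction bookkeeping, which is exactly where the hypothesis $m_2\ge 2$ must be used with care. One must verify that the very first run of $X$ has length $m+1$ while the last run has length $m$ — these follow from $\lceil p/q\rceil=m+1$ and $\lceil(q-1)p/q\rceil=p-m$ — so that when the two copies of $X$ are concatenated cyclically the adjacent pair reads $(m,m+1)$ rather than $(m+1,m+1)$; together with the internal non-adjacency this yields the claim globally. Making the equivalence "$r'\le 1/2\Leftrightarrow$ long gaps isolated" precise, and confirming that the junctions never manufacture a forbidden $(m+1,m+1)$, is the only delicate point; the remainder is elementary arithmetic of floor functions.
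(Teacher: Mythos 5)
This lemma is nowhere proved in the present paper: it is imported verbatim from \cite[Proposition~4.3]{lee_sakuma}, and the machinery quoted alongside it (Lemmas~\ref{lem:induction1}--\ref{lem:relation}) reflects that source's inductive treatment, which passes from $r=[m_1,\dots,m_k]$ to $r'=[m_2-1,m_3,\dots,m_k]$ via the $CT$-operation. Your proposal is therefore a genuinely different route: you read $CS(r)$ directly off the exponents $\epsilon_i=(-1)^{\lfloor iq/p\rfloor}$, and this reduction is sound. The symmetry $\lfloor (p-i)q/p\rfloor=q-1-\lfloor iq/p\rfloor$ does show that the cyclic sign sequence of $(u_r)$ is $(\epsilon_0,\dots,\epsilon_{p-1})$ followed by $(-1)^q(\epsilon_0,\dots,\epsilon_{p-1})$; since $\lfloor iq/p\rfloor$ increments by $0$ or $1$, the runs inside one copy have lengths $g_j=\lceil (j+1)p/q\rceil-\lceil jp/q\rceil\in\{m,m+1\}$, and $g_0=m+1$, $g_{q-1}=m$ as you say. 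This buys a self-contained elementary argument where the cited proof is recursive; the price is that the two delicate points must be carried out by hand, and these are exactly where your sketch is thin.

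First, the isolation of the $(m+1)$'s --- the only place where the hypothesis $m_2\ge 2$ enters, hence the heart of part (2) --- is justified by ``the long gaps occur with frequency exactly $r'\le 1/2$ and are therefore isolated.'' As stated this is a non sequitur: a sequence in which half the terms are long can perfectly well have adjacent long terms (e.g. the periodic pattern $m+1,m+1,m,m$), so frequency alone proves nothing; one needs the balance property of mechanical words, which you gesture at but do not establish. The clean repair is a two-line ceiling estimate: writing $p/q=m+s$ with $0<s\le 1/2$ (this is where $m_2\ge2$ is used), for every $j$ one has
\[
g_j+g_{j+1}=\bigl\lceil (j+2)p/q\bigr\rceil-\bigl\lceil jp/q\bigr\rceil
\le\bigl\lceil 2p/q\bigr\rceil=2m+\lceil 2s\rceil=2m+1<2(m+1),
\]
so no two consecutive gaps can both equal $m+1$. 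Second, your junction bookkeeping only checks that the pair of runs across each seam reads $(m,m+1)$; it silently assumes that these two runs do not \emph{merge} into a single run of length $2m+1$, which would be a worse violation than an adjacent pair $(m+1,m+1)$. Non-merging requires the signs to flip at both seams, and this must be said explicitly: the last sign of one copy is $\epsilon_{p-1}=(-1)^{q-1}$ while the first sign of the next copy is $(-1)^q\epsilon_0=(-1)^q$, and the last sign of the second copy is $(-1)^q\epsilon_{p-1}=-1$ while $\epsilon_0=+1$. Both facts follow at once from your own symmetry formula, so this gap is easily filled. With these two repairs your argument is complete and correct, and it is a nice alternative to the inductive proof in \cite{lee_sakuma}.
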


\begin{definition}
\label{def_CT(r)}
{\rm
If $k\ge 2$, the symbol $CT(r)$ denotes the cyclic sequence
$\lp t_1,t_2,\dots,t_s \rp$ in Lemma~\ref{lem:properties},
which is called the {\it $CT$-sequence of slope $r$}.
}
\end{definition}

\begin{lemma} [{\cite[Proposition~4.4 and Corollary~4.6]{lee_sakuma}}]
\label{lem:induction1}
For the rational number $r=[m_1,m_2, \dots,m_k]$ with $k\ge 2$ and $m_2\ge 2$,
let $r'$ be the rational number defined as
\[
r'=[m_2-1, m_3, \dots, m_k].
\]
Then we have
$CT(r)=CS(r')$.
\end{lemma}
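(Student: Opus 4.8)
The statement compares two successive run-length invariants of the sign sequence $\epsilon_i=(-1)^{\lfloor iq/p\rfloor}$ underlying $u_r$, and my plan is to read both $CT(r)$ and $CS(r')$ off the arithmetic of the floor function and then to recognize the map $r\mapsto r'$ as the renormalization induced by the continued-fraction shift $[m_1,m_2,\dots]\mapsto[m_2-1,m_3,\dots]$. Writing $r=q/p$ in lowest terms and $\rho=[m_2,\dots,m_k]$, so that $r=1/(m+\rho)$, a short continued-fraction computation gives $r'=\rho/(1-\rho)$; writing $r'=q'/p'$ in lowest terms, this becomes
\[
q'=p-mq,\qquad p'=(m+1)q-p=q-q',
\]
with $0<q'<q$, $\gcd(q,q')=\gcd(q,p)=1$, and $0<r'\le1$. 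By Lemma~\ref{lem:properties} the terms of $CS(r)$ are already known to be $m$ or $m+1$ with no two consecutive $m+1$'s, so the task reduces to locating the $m+1$'s inside $CS(r)$ and identifying the resulting gap pattern with $CS(r')$.

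For the first step, the maximal constant-sign blocks of $(\epsilon_i)$ realize the balanced (mechanical) sequence of slope $q'/q$: since $p/q=m+q'/q$, the $n$-th block has length $m+\delta_n$ with
\[
\delta_n=\lfloor(n+1)q'/q\rfloor-\lfloor nq'/q\rfloor\in\{0,1\},
\]
so the $m+1$'s are exactly the $1$'s of $(\delta_n)$. These $1$'s sit at the indices $n_v=\lceil(v+1)q/q'\rceil-1$, whence the number of $m$'s between the $1$'s indexed by $v-1$ and $v$ is $\lceil(v+1)q/q'\rceil-\lceil vq/q'\rceil-1$. This cyclic sequence is $CT(r)$. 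For the second step, the same analysis applied to $r'=q'/p'$, together with $p'/q'=q/q'-1$, shows that the terms of $CS(r')$ are $\lfloor(w+1)q/q'\rfloor-\lfloor wq/q'\rfloor-1$. The gap sequence just computed and the block sequence $CS(r')$ are thus the ceiling-based and floor-based first differences (each reduced by $1$) of the single Beatty sequence of slope $q/q'$; since $\gcd(q,q')=1$ these two sequences coincide as cyclic words by a standard three-distance symmetry, and therefore $CT(r)=CS(r')$.

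The genuine difficulty, and the step I expect to be the main obstacle, is the bookkeeping that turns the linear half-period formulas above into statements about the cyclic words of the full relators $u_r=a\,\hat u\,X\,\hat u^{-1}$ and $u_{r'}$, where $X\in\{b^{\pm1},a^{-1}\}$ is the central letter. The antisymmetry $\hat u\mapsto\hat u^{-1}$ doubles every block count—so that $CS(r)$ has $2q$ terms of which exactly $2q'$ equal $m+1$, matching the $2q'$ terms of $CS(r')$—but one must track precisely how the leading $a$, the central letter $X$, and the cyclic wrap-around merge or truncate the boundary blocks; this is exactly what decides the floor-versus-ceiling discrepancy and forces the conclusion to be an equality of cyclic, not linear, sequences. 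Organizing this junction analysis cleanly, most naturally by an induction on $k$ that strips off $m_1$ at each stage, is where essentially all of the work resides.
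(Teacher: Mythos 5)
First, a point of order: this lemma has no proof in the present paper at all --- it is imported wholesale from \cite[Proposition~4.4 and Corollary~4.6]{lee_sakuma}, so there is no in-paper argument to compare yours against; the meaningful comparison is with the cited reference (whose proof runs by induction on the length of the continued fraction, implementing at the level of the words the same renormalization you identify) and with your argument on its own merits. Your arithmetic skeleton is correct: with $r=q/p$ and $p=mq+q'$, $0<q'<q$, one does get $r'=q'/(q-q')$ in lowest terms; the run lengths of the sign sequence $\epsilon_i=(-1)^{\lfloor iq/p\rfloor}$ are $m$ plus a mechanical $\{0,1\}$-sequence of slope $q'/q$; and the cyclic agreement of the floor-based and ceiling-based difference sequences of $q/q'$ is a true palindromy fact (it is exactly what the symmetric decomposition of Lemma~\ref{lem:sequence} encodes). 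On the example $r_0=[4,3,3]=10/43$, $q'=3$, your formulas correctly reproduce $CT(r_0)=\lp 3,2,2,3,2,2 \rp = CS([2,3])$.

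Nevertheless, as a proof the proposal has a genuine gap, and you name it yourself. Everything you actually derive concerns the linear sequence $\lfloor iq/p\rfloor$, $1\le i\le p-1$, i.e.\ the half-word $\hat u_r$, whereas the statement is about the cyclic words $(u_r)\equiv(a\,\hat u_r\,x\,\hat u_r^{-1})$ and $(u_{r'})$: the leading $a$, the central letter $x$, and the cyclic wrap-around each truncate or amalgamate boundary runs, and moreover the true run lengths are ceiling-differences where your $\delta_n$ are floor-differences (the two differ precisely at the period boundary). Showing that these junction effects combine to give equality of \emph{cyclic} sequences --- rather than sequences off by one merged or split block at two places --- is, in your own words, ``where essentially all of the work resides,'' and it is not carried out; the ``standard three-distance symmetry'' is likewise invoked rather than proved or precisely cited. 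So what you have is a sound, well-chosen strategy whose decisive step is deferred: an outline, not yet a proof. To close it you would need to execute the junction analysis, most cleanly by the induction on $k$ you yourself suggest, which is essentially how the cited reference proceeds.
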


\begin{lemma} [{\cite[Proposition~4.5]{lee_sakuma}}]
\label{lem:sequence}
For the rational number $r=[m_1,m_2, \dots,m_k]$,
the cyclic sequence $CS(r)$ has a decomposition
$\lp S_1, S_2, S_1, S_2 \rp$ which satisfies the following.
\begin{enumerate} [\indent \rm (1)]
\item Each $S_i$ is symmetric,
i.e., the sequence obtained from $S_i$ by reversing the order is
equal to $S_i$. {\rm (}Here, $S_1$ is empty if $k=1$.{\rm )}

\item Each $S_i$ occurs only twice in
the cyclic sequence $CS(r)$.

\item The subsequence $S_1$ begins and ends with $m+1$.

\item The subsequence $S_2$ begins and ends with $m$.
\end{enumerate}
\end{lemma}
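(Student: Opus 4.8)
The plan is to induct on the length $k$ of the continued fraction expansion $r=[m_1,\dots,m_k]$, using the recursion $CT(r)=CS(r')$ of Lemma~\ref{lem:induction1} to pass to the shorter slope $r'=[m_2-1,m_3,\dots,m_k]$, and reading off $CS(r)$ from $CS(r')$ through the blow-up description of Lemma~\ref{lem:properties}(2). The base case $k=1$ is immediate: there $r=1/m$, $CS(r)=\lp m,m\rp$, and one takes $S_1$ empty and $S_2=(m)$. Throughout the inductive step I would first assume $m_2\ge 2$, so that Lemmas~\ref{lem:properties} and~\ref{lem:induction1} apply, and dispose of the case $m_2=1$ separately at the end.

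Assume $m_2\ge 2$ and that the statement holds for $r'$, giving $CS(r')=\lp T_1,T_2,T_1,T_2\rp$ with $T_1$ symmetric and beginning and ending with $m'+1=m_2$, and $T_2$ symmetric and beginning and ending with $m'=m_2-1$, each occurring exactly twice. By Lemma~\ref{lem:properties}(2), $CS(r)$ is obtained from $CT(r)=CS(r')$ by turning each entry $t$ into a run $m^t$ of $t$ successive $m$'s and inserting a single separator $m+1$ between consecutive runs. I would then lift the decomposition: let $S_2$ be the bare string of runs coming from one copy of $T_1$ (runs $m^p$, with $p$ ranging over $T_1$, joined by their internal separators, with no flanking $m+1$), and let $S_1$ be the string of runs coming from one copy of $T_2$ together with a separator $m+1$ attached at each end. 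Then $S_1$ begins and ends with $m+1$ and $S_2$ begins and ends with $m$, giving (3) and (4); each run $m^t$ is a palindrome, so symmetry of $T_1,T_2$ gives symmetry of $S_2,S_1$, i.e.\ (1); and checking that separators match at the four junctions shows $CS(r)=\lp S_1,S_2,S_1,S_2\rp$ up to rotation. Note the two blocks exchange roles under the lift: $S_1$ is built from $T_2$ and $S_2$ from $T_1$.

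The crux is property (2). Since $m+1$ occurs in $CS(r)$ only as a separator, any occurrence of a subword must align its $m+1$'s with genuine separators. For $S_1$, which is flanked by separators, this forces every interior run to be a complete run of $CS(r)$, so occurrences of $S_1$ are in bijection with occurrences of the factor $T_2$ in $CT(r)=CS(r')$ --- exactly two by the inductive hypothesis. For $S_2$ the only possible spurious occurrences are those whose first or last run is matched by a proper suffix or prefix of a longer run; this is ruled out precisely by property (3) one level down, namely that $T_1$ begins and ends with the \emph{maximal} value $m_2$ occurring in $CS(r')$, so that the first and last runs of $S_2$ already have the maximal length $m_2$ and cannot be extended. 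Hence occurrences of $S_2$ are in bijection with occurrences of the factor $T_1$, again exactly two. This self-reinforcing use of (3) to establish (2) is what I expect to be the main obstacle and the place requiring the most care, in particular the degenerate subcases $|T_1|=1$ or $|T_2|=1$.

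Two loose ends remain. When $r'=1/m'$ (that is, $k=2$) the block $T_1$ is empty and the lift above would collide two separators; here one instead takes directly $S_1=(m+1)$ and $S_2=m^{\,m_2-1}$, checked against $CS([m_1,m_2])=\lp m+1,(m_2-1)\langle m\rangle,m+1,(m_2-1)\langle m\rangle\rp$. The case $m_2=1$, where Lemmas~\ref{lem:properties} and~\ref{lem:induction1} fail and consecutive separators $(m+1,m+1)$ appear in $CS(r)$, must be handled by a parallel recursion adapted to this modified block structure. Finally, a conceptually uniform alternative --- which avoids the case split and explains why the four-block pattern exists at all --- is to read the decomposition straight off the defining word $u_r$, which has the form $a\,\hat u_{q/p}\,z\,\hat u_{q/p}^{-1}$ with $z\in\{b^{\pm1},a^{-1}\}$: the exponents satisfy $\epsilon_{p-i}=(-1)^{q-1}\epsilon_i$, making $\hat u_{q/p}$ palindromic up to sign, and this together with the built-in two-fold form $\hat u_{q/p}\cdots\hat u_{q/p}^{-1}$ yields simultaneously the repetition $\lp S_1,S_2,S_1,S_2\rp$ and the symmetry of the two blocks. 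Either way, verifying (2) by excluding unintended occurrences is the delicate step, and the induction reduces it to the corresponding statement one level down.
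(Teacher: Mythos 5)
This lemma is not actually proved in the present paper --- it is quoted from \cite[Proposition~4.5]{lee_sakuma}, and the only trace of its proof appearing here is Lemma~\ref{lem:relation}, which records from that proof exactly how the decomposition of $CS(r)$ is assembled from that of $CS(r')$. Your main construction --- blow up $CS(r')=\lp T_1,T_2,T_1,T_2\rp$ via Lemma~\ref{lem:properties}(2), take $S_2$ to be the bare blow-up of $T_1$ and $S_1$ to be the blow-up of $T_2$ with one separator $m+1$ attached at each end, then count occurrences by aligning the $(m+1)$'s with separators and using that the end runs of $S_2$ have the maximal length $m_2$ --- reproduces precisely the formulas of Lemma~\ref{lem:relation}, so where it applies your induction follows the same scheme as the cited proof, and your occurrence-counting argument for property (2) is sound in that regime.

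The genuine gap is the case $m_2=1$, which you name but defer (``must be handled by a parallel recursion'') and never carry out; it cannot be dismissed as a degenerate subcase. The lemma is stated for arbitrary $r=[m_1,\dots,m_k]$, and the normalization only forces $m_k\ge 2$, so slopes such as $[2,1,2]$ are covered by the statement: there $CS(r)=\lp 3,3,2,3,3,2\rp$, so the $m$'s are isolated while the $(m+1)$'s come in runs --- the roles of $m$ and $m+1$, hence of $S_1$ and $S_2$, are interchanged, and the relevant recursion descends to $[m_3,\dots,m_k]$ rather than to $[m_2-1,m_3,\dots,m_k]$. That is a different statement from Lemma~\ref{lem:induction1}, requiring its own proof and a re-run of your lifting and counting arguments with the two roles swapped. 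Moreover, the omission contaminates the case you do treat: your induction on $k$ needs the full statement for every shorter slope, and $r'=[m_2-1,m_3,\dots,m_k]$ can have second entry equal to $1$ even when $m_2\ge 2$ (e.g.\ $r=[4,2,1,3]$ gives $r'=[1,1,3]$), so as written your argument establishes the lemma only for slopes with $m_i\ge 2$ for all $i\ge 2$ --- enough, as it happens, for every slope used in this paper, but not for the statement as quoted. The palindrome argument you sketch at the end does not close the gap either: as you yourself concede, it yields the four-block form and the symmetry claims (1), (3), (4), but not the crucial uniqueness-of-occurrence property (2).
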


\begin{lemma} [{\cite[Proof of Proposition~4.5]{lee_sakuma}}]
\label{lem:relation}
For the rational number $r=[m_1,m_2, \dots,m_k]$ with $k\ge 2$ and $m_2\ge 2$,
let $r'$ be the rational number defined as in Lemma~\ref{lem:induction1}.
Also let $CS(r')=\lp T_1, T_2, T_1, T_2 \rp$ and $CS(r)=\lp S_1, S_2, S_1, S_2 \rp$
be the decompositions described in Lemma~\ref{lem:sequence}.
Then the following hold.
\begin{enumerate} [\indent \rm (1)]
\item If $k=2$, then $T_1=\emptyset$, $T_2=(m_2-1)$,
and $S_1 =(m+1)$, $S_2 =((m_2-1)\langle m \rangle)$.

\item If $k\ge 3$, then $T_1=(t_1, \dots, t_{s_1})$,
$T_2=(t_{s_1+1}, \dots, t_{s_2})$, and
\[
\begin{aligned}
S_1 &=(m+1, t_{s_1+1}\langle m\rangle, m+1,
\dots, m+1, t_{s_2}\langle m\rangle, m+1),\\
S_2 &=(t_1\langle m\rangle, m+1,t_2\langle m\rangle, \dots,
t_{s_1-1}\langle m\rangle, m+1, t_{s_1}\langle m\rangle).
\end{aligned}
\]
\end{enumerate}
\end{lemma}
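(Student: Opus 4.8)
The plan is to obtain both identities by substituting the explicit description of $CS(r)$ from Lemma~\ref{lem:properties}(2) into the identification $CT(r)=CS(r')$ of Lemma~\ref{lem:induction1}, and then to regroup the resulting cyclic word so that it visibly takes the form $\lp S_1,S_2,S_1,S_2\rp$ demanded by Lemma~\ref{lem:sequence}. Write $\lp t_1,\dots,t_s\rp:=CT(r)=CS(r')$, and note that Lemma~\ref{lem:properties}(2) says $CS(r)$ is obtained from this sequence by replacing each term $t_j$ with the block $(m+1,t_j\langle m\rangle)$; equivalently, the maximal runs of $m$'s lying between consecutive occurrences of $m+1$ in $CS(r)$ are exactly $t_1,\dots,t_s$. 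This dictionary, which translates sub-blocks of $CS(r)$ into sub-blocks of $CS(r')$, is the engine of the whole argument.

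For part~(1), where $k=2$, I would compute directly. Here $r'=[m_2-1]=1/(m_2-1)$, so Lemma~\ref{lem:properties}(1) gives $CS(r')=\lp m_2-1,m_2-1\rp$, and the $k=1$ case of Lemma~\ref{lem:sequence} forces $T_1=\emptyset$ and $T_2=(m_2-1)$. Replacing each term by its block yields $CS(r)=\lp m+1,(m_2-1)\langle m\rangle,m+1,(m_2-1)\langle m\rangle\rp$, which is already displayed in the form $\lp S_1,S_2,S_1,S_2\rp$ with $S_1=(m+1)$ and $S_2=((m_2-1)\langle m\rangle)$; the four conditions of Lemma~\ref{lem:sequence} are then readily verified, giving the claim.

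For part~(2), where $k\ge 3$, I would apply Lemma~\ref{lem:sequence} to $r'$ to write $CS(r')=\lp T_1,T_2,T_1,T_2\rp$ with $T_1=(t_1,\dots,t_{s_1})$ and $T_2=(t_{s_1+1},\dots,t_{s_2})$, so that $s=2s_2$. Substituting this doubled sequence and regrouping so that each $m+1$ is bound to the run of $m$'s immediately following it, one checks that the proposed $S_1$ and $S_2$ concatenate to reproduce $CS(r)$: the final $m+1$ of $S_1$ joins the initial run $t_1\langle m\rangle$ of $S_2$ into the block $(m+1,t_1\langle m\rangle)$, and so on around the cycle. Conditions~(3) and~(4) are then immediate from the displayed forms, and condition~(1) reduces to the symmetry of $T_2$ and of $T_1$, since reversing $S_1$ (resp.\ $S_2$) reverses its pattern of $m$-runs while fixing its $m+1$/run skeleton.

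The step I expect to be the real obstacle is condition~(2), that each $S_i$ occurs only twice in $CS(r)$. For $S_1$ this is clean: since $S_1$ begins and ends with $m+1$, and no two consecutive terms of $CS(r)$ equal $(m+1,m+1)$ by Lemma~\ref{lem:properties}(2), every occurrence of $S_1$ must start at a letter $m+1$ and is then pinned down by requiring the ensuing $m$-runs to spell $T_2$; via the dictionary above this places occurrences of $S_1$ in bijection with occurrences of the block $T_2$ in $CS(r')$, of which there are exactly two by condition~(2) for $r'$. The difficulty is $S_2$, which begins and ends with runs of $m$ and could a priori be matched starting in the interior of a longer $m$-run; ruling out such misaligned occurrences (equivalently, confirming that the two occurrences of $S_1$ cut $CS(r)$ into two equal arcs $S_2$ that occur nowhere else) is precisely where the rigidity of the $CS$-sequences, rather than mere bookkeeping, is needed. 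Once this is settled, the $S_1,S_2$ just constructed satisfy all four conditions and therefore coincide with the decomposition of Lemma~\ref{lem:sequence} for $r$, which yields the stated formulas.
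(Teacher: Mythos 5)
The paper does not prove this lemma at all: it is quoted from the proof of Proposition~4.5 of \cite{lee_sakuma}, where the decomposition $\lp S_1,S_2,S_1,S_2 \rp$ is produced by an inductive construction that outputs exactly the displayed formulas. Your reconstruction must therefore stand on its own, and as written it does not close. Your part~(1), your substitution dictionary, and your verification of conditions (1), (3), (4) in part~(2) are fine, and your count of occurrences of $S_1$ is correct (it works precisely because $S_1$ starts and ends with the maximal term $m+1$, whose occurrences in $CS(r)$ are isolated, so there is no alignment slack). But you yourself flag that condition~(2) of Lemma~\ref{lem:sequence} is unverified for $S_2$ and stop at ``once this is settled.'' That is a genuine gap, not a formality: in your dictionary an occurrence of $S_2=(t_1\langle m\rangle, m+1,\dots,m+1, t_{s_1}\langle m\rangle)$ in $CS(r)$ corresponds to a contiguous subsequence $(t_1+d_1,t_2,\dots,t_{s_1-1},t_{s_1}+d_2)$ of $CS(r')$ with $d_1,d_2\ge 0$, because the end runs of $S_2$ may sit inside longer maximal $m$-runs, and nothing in your argument excludes $d_i>0$. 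Until that is excluded you have neither condition~(2) nor the bijection with occurrences of $T_1$ that your count needs.

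The gap can be closed by the same rigidity trick the paper itself uses in the proof of Lemma~\ref{lem:useful} and in Claim~1 inside the proof of Lemma~\ref{lem:claim5}: by Lemma~\ref{lem:sequence}(3) applied to $r'=[m_2-1,m_3,\dots,m_k]$ we have $t_1=t_{s_1}=m_2$, while by Lemma~\ref{lem:properties}(2) applied to $r'$ every term of $CS(r')$ is $m_2-1$ or $m_2$; hence $t_1+d_1\le m_2$ and $t_{s_1}+d_2\le m_2$ force $d_1=d_2=0$, so occurrences of $S_2$ biject with occurrences of $T_1$, of which there are exactly two. (This step needs Lemma~\ref{lem:properties}(2) to apply to $r'$, i.e.\ $m_3\ge 2$ when $k\ge 4$; that hypothesis is absent from the statement you were asked to prove, but it holds in every application made in this paper.) A second, smaller defect: your closing inference that the constructed pair ``therefore coincides with the decomposition of Lemma~\ref{lem:sequence}'' silently assumes that a decomposition satisfying (1)--(4) is unique. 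Lemma~\ref{lem:sequence} asserts only existence; the cited source never faces this issue because the formulas of the present lemma are read off from the construction in its proof of Proposition~4.5 rather than matched against an abstract existence statement, and your argument should either prove uniqueness or be rephrased as producing a decomposition with the stated form.
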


The following lemma is useful in the proof of Lemma~\ref{lem:claim5}.

\begin{lemma}
\label{lem:useful}
For two distinct rational numbers
$r=[m_1, m_2, \dots, m_k]$ and $s=[m_1, l_2, \dots, l_t]$,
assume that
\begin{enumerate} [\indent \rm (i)]
\item $m$ is a positive integer;

\item $m_i$ and $l_j$ are integers greater than $1$ for every $i\ge 2$ and $j \ge 2$;

\item $k, t \ge 3$ and $k \neq t$; and

\item if $k<t$, then $m_2 \ge l_2$, while if $k>t$, then $m_2 \le l_2$.
\end{enumerate}
Let $r'$ and $s'$ be the rational numbers defined as in Lemma~\ref{lem:induction1}.
Also let $CS(r)=\lp S_1, S_2, S_1, S_2 \rp$ and $CS(r')=\lp T_1, T_2, T_1, T_2 \rp$
be the decompositions described in Lemma~\ref{lem:sequence}.
Suppose that $CS(s)$ contains $S_1$ or $S_2$ as a subsequence.
Then $CS(s')$ contains $T_1$ or $T_2$ as a subsequence.
\end{lemma}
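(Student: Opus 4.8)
The plan is to exploit the fact that passing from a slope to its ``derived'' slope is, at the level of $CS$-sequences, nothing but the operation of recording gap lengths. Concretely, since $m_1=m$ is common to $r$ and $s$, both $CS(r)$ and $CS(s)$ are cyclic sequences over $\{m,m+1\}$ with no two consecutive $m+1$'s (Lemma~\ref{lem:properties}), and by Lemma~\ref{lem:induction1} the sequences $CS(r')=CT(r)$ and $CS(s')=CT(s)$ are obtained by reading off, cyclically, the number of $m$'s occurring between consecutive $(m+1)$'s. The upshot is a transfer principle: if $W$ is any subword of $CS(s)$ that begins and ends with $m+1$, then the sequence of gap lengths strictly between its first and last $m+1$ occurs as a consecutive subsequence of $CS(s')$. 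Thus it suffices to locate inside $CS(s)$ a subword beginning and ending with $m+1$ whose interior gap sequence is $T_1$ or $T_2$.

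First I would dispose of the case in which $CS(s)$ contains $S_1$. By Lemma~\ref{lem:relation}, $S_1$ itself begins and ends with $m+1$ and its interior gap sequence is exactly $T_2=(t_{s_1+1},\dots,t_{s_2})$; hence the transfer principle immediately yields that $CS(s')$ contains $T_2$, with no further hypotheses needed. The case in which $CS(s)$ contains $S_2$ is the substantial one. Here $S_2$ begins and ends with $m$, so it is not directly of the required form; however, inside $CS(r)=\lp S_1,S_2,S_1,S_2\rp$ the block $S_2$ is flanked on both sides by the $m+1$ that starts (resp. ends) the adjacent copy of $S_1$, and the enlarged block $W=(m+1)\,S_2\,(m+1)$ has interior gap sequence precisely $T_1=(t_1,\dots,t_{s_1})$. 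So the goal reduces to showing that, whenever $S_2$ occurs as a subword of $CS(s)$, it is in fact flanked in $CS(s)$ by $m+1$ on each side; the transfer principle then delivers $T_1$ inside $CS(s')$.

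To establish the flanking I would argue by contradiction using a length count on the maximal runs of $m$'s. Applying Lemma~\ref{lem:sequence} to $r'=[m_2-1,m_3,\dots,m_k]$ (whose ``$m$'' is $m_2-1$ and whose ``$m+1$'' is $m_2$), the subsequence $T_1$ begins and ends with $m_2$, so the leading and trailing runs of $m$'s in $S_2$ have length $t_1=t_{s_1}=m_2$. On the other hand, by Lemma~\ref{lem:properties} applied to $s'=[l_2-1,l_3,\dots,l_t]$, every gap of $CS(s)$, i.e. every maximal run of $m$'s, has length $l_2-1$ or $l_2$. If $S_2$ were not flanked by $m+1$ on, say, the left, its leading run would extend to a maximal run of length $\ge m_2+1$, forcing $l_2\ge m_2+1$. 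When $k<t$, condition~(iv) gives $m_2\ge l_2$, an immediate contradiction, so the flanking holds and the proof is complete in this case. The main obstacle is the complementary regime $k>t$ with $l_2>m_2$, where condition~(iv) only gives $m_2\le l_2$ and does not by itself forbid an over-long adjacent run. I expect to resolve this by a compatibility analysis: since $CS(r')$ has no two consecutive $m_2$'s (Lemma~\ref{lem:properties}), the interior gaps $t_2,\dots,t_{s_1-1}$ of $S_2$ cannot all equal $m_2$ once there are at least two of them, so a gap of value $m_2-1$ must occur; but when $l_2>m_2$ the gaps available in $CS(s)$ are only $l_2-1\ge m_2$ and $l_2$, so a run of length $m_2-1$ cannot be realized and $S_2$ cannot embed at all, forcing the remaining short configurations, which are then handled directly. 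Carrying out this finitary case check, and verifying that the hypothesis $k\neq t$ is exactly what guarantees one of the two usable inequalities in~(iv), is where the real work of the proof lies.
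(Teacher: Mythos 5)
Your proposal follows the same skeleton as the paper's proof: the $S_1$ case is dispatched by the transfer principle exactly as in the paper; in the $S_2$ case, the flanking property you aim for is precisely the paper's statement that $d_1=d_2=0$; and the subcase $k<t$ is closed by the same one-line contradiction from hypothesis (iv). The genuine gap is in the subcase $k>t$. There your contradiction requires an \emph{interior} gap of $S_2$ of length exactly $m_2-1$ (equivalently, an interior term $m_2-1$ of $T_1$), and your adjacency argument --- $t_1=m_2$ together with the fact that $CS(r')$ has no two consecutive $m_2$'s, so the gap next to $t_1$ equals $m_2-1$ --- produces such a term only when $T_1$ is long enough, namely $s_1\ge 3$. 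You explicitly leave the ``remaining short configurations'' unhandled and even flag them as ``where the real work of the proof lies,'' so the proof is unfinished at exactly its critical point.

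The missing observation is that those short configurations cannot occur, and seeing this requires applying the structural lemmas one more level down the continued fraction, which is precisely what the paper does. Since $t\ge 3$ and $k>t$, we have $k\ge 4$, so $r'=[m_2-1,m_3,\dots,m_k]$ has continued-fraction length $k-1\ge 3$, and its entries $m_3,\dots,m_k$ are all at least $2$ by hypothesis (ii). Hence Lemma~\ref{lem:relation}(2), applied to $r'$ rather than to $r$, exhibits $T_1=S_1(r')$ in the form $(m_2,\, c_1\langle m_2-1\rangle,\, m_2,\,\dots,\, m_2,\, c_q\langle m_2-1\rangle,\, m_2)$ with every $c_j\ge 1$; in particular $s_1\ge 3$ and $T_1$ automatically contains an interior term $m_2-1$. (This is the paper's step asserting that $S_2$ contains $(m+1,(m_2-1)\langle m\rangle, m+1)$ as a subsequence.) This single remark both validates your adjacency argument in all cases and shows that your deferred case analysis is vacuous: $s_1=1$ would force $k=3$ by Lemma~\ref{lem:relation}(1), which is excluded here, and $s_1=2$ is impossible because then $t_1=t_{s_1}=m_2$ would be two consecutive terms of $CS(r')$ equal to $m_2$, contradicting Lemma~\ref{lem:properties}(2). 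With this inserted, your argument closes and coincides with the paper's proof.
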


In the above lemma (and throughout this paper),
we mean by a {\it subsequence}
a subsequence without leap.
Namely a sequence $(a_1,a_2,\dots, a_l)$
is called a {\it subsequence} of a cyclic sequence,
if there is a sequence $(b_1,b_2,\dots, b_n)$
representing the cyclic sequence
such that $l \le n$ and
$a_i=b_i$ for $1\le i\le l$.

\begin{proof}
First suppose that $CS(s)$ contains $S_1$ as a subsequence.
By Lemma~\ref{lem:relation}(2),
$CS(s)$ contains $(m+1, t_{s_1+1}\langle m\rangle, m+1,
\dots, m+1, t_{s_2}\langle m\rangle, m+1)$ as a subsequence,
where $T_2=(t_{s_1+1}, \dots, t_{s_2})$.
Then clearly $CS(s')=CT(s)$ contains
$(t_{s_1+1}, \dots, t_{s_2})$, that is, $T_2$,
as a subsequence. So we are done.

Next suppose that $CS(s)$ contains $S_2$ as a subsequence.
Again by Lemma~\ref{lem:relation}(2),
$CS(s)$ contains $(t_1\langle m\rangle, m+1,t_2\langle m\rangle, \dots,
t_{s_1-1}\langle m\rangle, m+1, t_{s_1}\langle m\rangle)$ as a subsequence,
where $T_1=(t_1, \dots, t_{s_1})$.
Then $CS(s')=CT(s)$ contains
$(d_1+t_1, t_2, \dots, t_{s_1-1}, t_{s_1}+d_2)$
as a subsequence, where $d_1, d_2 \ge 0$.
In the reminder of the proof, we show that $d_1=d_2=0$,
so that $CS(s')=CT(s)$ contains
$T_1=(t_1, t_2, \dots, t_{s_1})$ as a subsequence.
To this end, note that since $r'=[m_2-1, m_3, \dots, m_k]$,
$t_1=t_{s_1}=m_2$ by Lemma~\ref{lem:sequence}(3).
Also since $s'=[l_2-1, l_3, \dots, l_t]$,
$CS(s')=CT(s)$ consists of $l_2-1$ and $l_2$ by Lemma~\ref{lem:properties}(2).
Hence each of
$d_1+t_1=d_1+m_2$ and $t_{s_1}+d_2=m_2+d_2$ is either $l_2-1$ or $l_2$.
Suppose first that $k<t$.
Then $m_2 \ge l_2$ by the assumption (iv),
and thus the only possibility is $m_2=l_2$. Thus we have $d_1=d_2=0$.
Suppose next that $k>t$.
Then $m_2 \le l_2$ again by the assumption (iv).
Note that $k\ge 4$, because $t \ge 3$ by the assumption (iii).
Thus we can see, by using Lemma~\ref{lem:relation}
and the assumption $m_i \ge 2$ for every $i \ge 2$, that
$S_2$ contains $(m+1, (m_2-1) \langle m \rangle, m+1)$ as
a subsequence.
This implies that
$CS(s')=CT(s)$ contains a term $m_2-1$.
Since $m_2-1 \le l_2-1$, the only possibility $m_2=l_2$.
Thus we again have $d_1=d_2=0$, completing
the proof of Lemma~\ref{lem:useful}.
\end{proof}

\subsection{Small cancellation theory applied to the upper presentations}

A subset $R$ of the free group $F(a,b)$ is called {\it symmetrized},
if all elements of $R$ are cyclically reduced and,
for each $w \in R$, all cyclic permutations of $w$ and $w^{-1}$ also belong to $R$.

\begin{definition}
{\rm Suppose that $R$ is
a symmetrized subset of $F(a,b)$.
A nonempty word $v$ is called
a {\it piece {\rm (}with respect to $R$\rm{)}}
if there exist distinct
$w_1, w_2 \in R$ such that $w_1 \equiv vc_1$ and $w_2 \equiv vc_2$.
The small cancellation conditions $C(p)$ and $T(q)$,
where $p$ and $q$ are integers such that $p \ge 2$ and $q \ge 3$,
are defined as follows (see \cite{lyndon_schupp}).
\begin{enumerate}
\item Condition $C(p)$: If $w \in R$
is a product of $n$ pieces, then $n \ge p$.

\item Condition $T(q)$: For
$w_1, \dots, w_n \in R$
with no successive elements $w_i, w_{i+1}$
an inverse pair $(i$ mod $n)$, if $n < q$, then at least one of the products
$w_1 w_2,\dots,$ $w_{n-1} w_n$, $w_n w_1$
is freely reduced without cancellation.
\end{enumerate}
}
\end{definition}

The following proposition enables us to apply small cancellation theory
to the upper presentation
$\langle a, b \, |\, u_r \rangle$ of $G(K(r))$.

\begin{proposition}[{\cite[Theorem 5.1]{lee_sakuma}}]
\label{prop:small_cancellation}
Let $r$ be a rational number such that $0 < r< 1$, and
let $R$ be the symmetrized subset of $F(a, b)$ generated
by the single relator $u_r$
of the group presentation $G(K(r))=\langle a, b \, |\, u_r \rangle$.
Then $R$ satisfies $C(4)$ and $T(4)$.
\end{proposition}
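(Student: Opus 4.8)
The plan is to reduce the two conditions $C(4)$ and $T(4)$ to purely combinatorial statements about $CS(r)$ and then settle them using the structural results of this section, above all Lemma~\ref{lem:sequence} and Lemma~\ref{lem:properties}; the base case $k=1$ (where $r=1/m$ with $m\ge 2$ and $CS(r)=\lp m,m\rp$) is handled by direct inspection of the short relator, so I assume $k\ge 2$. Every element of $R$ is a cyclic permutation of $u_r$ or of $u_r^{-1}$, hence is cyclically alternating, so any piece $v$ --- a common subword of two distinct elements of $R$ --- is alternating and is determined by its initial letter together with its $S$-sequence $S(v)$. By Lemma~\ref{lem:sequence} we may write $CS(r)=\lp S_1,S_2,S_1,S_2\rp$ with $S_1,S_2$ symmetric, whence reversing $CS(r)$ returns the same cyclic sequence and $CS(u_r^{-1})=CS(u_r)$ as cyclic sequences. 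Thus, after trimming the two boundary syllables of $v$ (whose lengths may shrink where $v$ is cut out), the occurrence of $v$ in two distinct elements of $R$ becomes the statement that the interior $S$-sequence of $v$ appears as a subsequence of $CS(r)$ in two genuinely distinct positions, possibly with opposite orientations.

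\emph{Bounding pieces.} This is the technical heart. By Lemma~\ref{lem:properties}(2) every term of $CS(r)$ is $m$ or $m+1$ and no two consecutive terms equal $(m+1,m+1)$, while by Lemma~\ref{lem:sequence}(2) each of the symmetric blocks $S_1,S_2$ occurs exactly twice in $CS(r)$. A subsequence forced to occur twice cannot extend beyond what these two occurrences allow, and tracking the positions of the $m+1$'s then bounds the syllable length of any piece by essentially $\max\{|S_1|,|S_2|\}$, up to a bounded additive error from the trimmed boundary syllables. I would package this as the key estimate: no piece spans a full period $(S_1,S_2)$ of $CS(r)$.

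\emph{Deriving $C(4)$ and $T(4)$.} For $C(4)$, the cyclic word $(u_r)$ carries two copies each of $S_1$ and $S_2$, so summing the bound of the previous step over any three pieces falls short of covering all of $(u_r)$; hence every element of $R$ is a product of at least four pieces. For $T(4)$, I would take $w_1,w_2,w_3\in R$ with no successive inverse pair and assume that each cyclic product $w_1w_2$, $w_2w_3$, $w_3w_1$ cancels. Each cancelled block is then a piece, and arranging the three pieces around the common cyclic words forces three overlapping alternating subwords to be mutually compatible; the $m$-versus-$(m+1)$ dichotomy together with the symmetry of $S_1,S_2$ makes such a simultaneous fit impossible, which yields $T(4)$.

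The main obstacle is the bounding step. One must rule out \emph{orientation-reversing} overlaps between a permutation of $u_r$ and a permutation of $u_r^{-1}$, and one must control the boundary syllables of a piece precisely, since their lengths can drop below $m$ exactly where the piece is truncated. These are the two subtleties that the palindromic symmetry of $CS(r)$ and the ``each $S_i$ occurs only twice'' clause of Lemma~\ref{lem:sequence} are tailored to handle, so the whole argument ultimately rests on exploiting Lemma~\ref{lem:sequence} to its full strength.
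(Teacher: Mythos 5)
Your plan has two genuine gaps, one in each half. For $C(4)$, the key estimate you propose to package -- ``no piece spans a full period $(S_1,S_2)$ of $CS(r)$'' -- is both unproven and, more importantly, too weak for the counting you then run. The cyclic word $(u_r)$ has $CS$-sequence $\lp S_1,S_2,S_1,S_2\rp$, i.e.\ two full periods of $P:=|S_1|+|S_2|$ terms; three pieces each bounded by one period can have total syllable length up to $3(P-1)$, which is $\ge 2P$ as soon as $P\ge 3$ (and $P$ is large here: for $r_0=[4,3,3]$ one has $P=10$), so ``summing the bound over any three pieces'' does \emph{not} fall short of covering $(u_r)$. What is actually needed is the sharper characterization that the paper quotes as Lemma~\ref{lem:max_piece} (from \cite[Corollary 3.25]{lee_sakuma_2}): a subword of $(u_r^{\pm 1})$ is a piece if and only if its $S$-sequence contains neither $S_1$ nor $(\ell_1,S_2,\ell_2)$ with $\ell_1,\ell_2\in\ZZ_+$ as a subsequence. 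The correct use of Lemma~\ref{lem:sequence}(2) (each $S_i$ occurs only twice) is to prove this characterization: a subword containing $S_1$ or a flanked $S_2$ pins down its own location in $(u_r^{\pm1})$, hence cannot occur in two \emph{distinct} elements of $R$ and is not a piece. Then $C(4)$ follows not from a length count but from an obstruction count: in any expression of $(u_r)$ as a product of pieces, each of the four blocks $S_1,S_2,S_1,S_2$ must contain a piece boundary, so there are at least four pieces.

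For $T(4)$ your proposed mechanism is wrong, not merely vague. The $m$-versus-$(m+1)$ dichotomy and the symmetry of $S_1,S_2$ cannot rule out three mutual cancellations, because each single cancellation is perfectly consistent with the $S$-sequence structure (given $w_1$, just take $w_2$ to be the cyclic permutation of $u_r$ beginning with the inverse of the last letter of $w_1$); no pairwise $S$-sequence incompatibility exists to exploit. The real reason is parity of letter types: every element of $R$ is a cyclically alternating word of even length $2p$, so its first and last letters are of different types (one is $a^{\pm1}$, the other $b^{\pm1}$). If all of $w_1w_2$, $w_2w_3$, $w_3w_1$ cancelled, then the type of the last letter of $w_i$ would equal the type of the first letter of $w_{i+1}$ cyclically, and following types around the triple gives that the first letter of $w_1$ has the type opposite to itself, a contradiction. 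This is exactly why the paper disposes of $T(4)$ in one line (``since every element in $R$ is cyclically alternating'') in the proof of Lemma~\ref{lem:claim5}. Note also that the paper does not reprove Proposition~\ref{prop:small_cancellation} at all: it is imported from \cite[Theorem 5.1]{lee_sakuma} and reduced to Lemma~\ref{lem:max_piece}, so any self-contained proof should be organized around that characterization of pieces rather than around a length bound.
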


This proposition follows from
the following characterization of pieces,
which in turn is proved by using Lemma~\ref{lem:sequence}.

\begin{lemma}
 [{\cite[Corollary 3.25]{lee_sakuma_2}}]
\label{lem:max_piece}
Let $r$ and $R$ be as in Proposition~\ref{prop:small_cancellation}.
Then a subword $w$ of the cyclic word
$(u_r^{\pm 1})$ is a piece
with respect to $R$
if and only if $S(w)$ contains neither $S_1$
nor $(\ell_1, S_2, \ell_2)$ with $\ell_1,\ell_2\in\ZZ_+$
as a subsequence.
\end{lemma}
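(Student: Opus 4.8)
The plan is to reduce the assertion to a counting problem for occurrences of the cyclic sequence $CS(r)$. Since $w$ is a subword of the cyclically alternating word $(u_r^{\pm1})$, it is itself alternating and hence is determined by the pair consisting of its $S$-sequence $S(w)$ and its initial letter-with-exponent. By definition $w$ is a piece exactly when it is a common initial segment of two distinct members of $R$, that is, when $w$ occurs as a \emph{signed} subword in at least two distinct places of the union of the cyclic words $(u_r)$ and $(u_r^{-1})$. First I would build a dictionary translating such occurrences into \emph{matches} of $S(w)=(s_1,\dots,s_l)$ against $CS(r)$ (which, since each $S_i$ is symmetric, coincides for $u_r$ and $u_r^{-1}$): the interior blocks $s_2,\dots,s_{l-1}$ must equal consecutive terms of $CS(r)$, while the two extreme blocks $s_1,s_l$ are only constrained by ``$\le$'' (the ends of $w$ may truncate a maximal block of $u_r$), and the initial letter imposes a consistency condition that, through the explicit letters of $u_{q/p}=a\hat u_{q/p}b^{(-1)^q}\hat u_{q/p}^{-1}$, typically cuts the abstract matches down.

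For the forward implication I would argue by contraposition: if $S(w)$ contains $S_1$, or contains $(\ell_1,S_2,\ell_2)$ for some $\ell_1,\ell_2\in\ZZ_+$, then $w$ has a \emph{unique} occurrence and so is not a piece. The engine is Lemma~\ref{lem:sequence}: each of $S_1,S_2$ occurs exactly twice in $CS(r)$, with $S_1$ beginning and ending in $m+1$ and $S_2$ beginning and ending in $m$. If $S(w)\supseteq S_1$, then because the extreme value $m+1$ of $S_1$ is the maximal value occurring in $CS(r)$ (Lemma~\ref{lem:properties}), the truncation slack at the ends of $w$ is forced to vanish, so the $S_1$-block is matched \emph{exactly}; it therefore lies in one of the two copies of $S_1$, and since those two copies carry distinct letter patterns (coming from the opposite orientations of the $\hat u_{q/p}$ and $\hat u_{q/p}^{-1}$ portions of $u_r$) the initial letter of $w$ selects exactly one of them, pinning the position of $w$. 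If instead $S(w)\supseteq(\ell_1,S_2,\ell_2)$, the padding makes the copy of $S_2$ strictly interior to $S(w)$, hence matched exactly, and the same ``exactly-twice plus letters'' argument pins $w$. The reason $S_2$ must be padded while $S_1$ need not is precisely that the extreme value $m$ of $S_2$ is sub-maximal, so an \emph{unpadded} boundary $S_2$ retains nonzero truncation slack and can be realized in more than one way; isolating this asymmetry is the conceptual core.

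For the reverse implication I would show that avoidance of both patterns forces at least two occurrences. The point is that the hypothesis keeps $w$ ``non-rigid'': since $S(w)$ contains no full $S_1$ and no interior $S_2$, every extremal block of value $m$ may be taken as a truncation of an adjacent $(m+1)$-block, and the two guaranteed copies of $S_1$ and $S_2$ (Lemma~\ref{lem:sequence}) supply a second landing site. Concretely I would produce the second occurrence from the end-truncation slack — the prototype being a length-$m$ block realized once as the suffix of an $(m+1)$-block and once as a full $m$-block, which already yields a genuine repeat across $(u_r)$ and $(u_r^{-1})$ — and then verify that the partner occurrence is distinct from the original, the avoidance hypothesis guaranteeing that $w$ is too short or too unconstrained to be pinned. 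This step requires careful casework according to whether each extremal block is truncated and whether the partner arises in $(u_r)$ or in $(u_r^{-1})$.

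The hard part will be making the word-to-sequence dictionary of the first paragraph fully rigorous at the level of \emph{signed} alternating words rather than unsigned $CS$-data. Specifically, in the forward direction I must prove that the two copies of a full $S_1$-block (resp.\ of an interior $S_2$-block) really do carry distinguishing letter patterns, so that fixing the initial letter pins the location uniquely throughout the wheel $(u_r)\cup(u_r^{-1})$; and dually, in the reverse direction I must exhibit a second occurrence carrying the \emph{same} signed word $w$. I would organize the entire argument around the single distinction between an $S_i$-block lying interior to $S(w)$ and one overlapping a truncated end, since it is exactly this distinction — harmless for the maximal value $m+1$ but not for $m$ — that produces the $S_1$-versus-$S_2$ asymmetry recorded in the statement.
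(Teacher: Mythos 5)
You should first be aware that the paper contains no proof of this lemma to compare against: it is imported verbatim from \cite[Corollary~3.25]{lee_sakuma_2}, so your attempt has to stand entirely on its own. The parts of your plan that are sound are the dictionary and the alignment step: a subword of $(u_r^{\pm1})$ is a piece exactly when it is a common prefix of two distinct elements of $R$, interior blocks of $S(w)$ must match full blocks of the ambient cyclic word while only the two end blocks have truncation slack, and maximality of $m+1$ (Lemma~\ref{lem:properties}) kills that slack for an $S_1$-portion just as padding $(\ell_1,S_2,\ell_2)$ does for an $S_2$-portion, so that Lemma~\ref{lem:sequence}(2) pins the portion to one of the designated copies \emph{at the level of $CS$-sequences}. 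That much is correct.

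The gap is that both directions then rest on assertions about \emph{signed words} that you defer, and that do not follow from the formal symmetry considerations you invoke. In the forward direction there are four aligned copies in the wheel, not two (two in $(u_r)$ and two in $(u_r^{-1})$), and they carry words of the form $s$, $\tau(s)$, $s^{-1}$, $\tau(s)^{-1}$, where $\tau$ is the $a{\leftrightarrow}b$ swap $\sigma$, possibly composed with exponent inversion, depending on the parities of $p$ and $q$. That $s\neq\tau(s)$ and $s\neq s^{-1}$ is formal, but $s\neq\tau(s)^{-1}$ is not: alternating words fixed by $\tau\circ(\cdot)^{-1}$ exist (e.g.\ $a\bar b=\sigma\bigl((a\bar b)^{-1}\bigr)$), so ``occurs exactly twice plus the initial letter'' cannot pin the occurrence; excluding such coincidences for the actual subwords of $u_r$ is precisely the inductive content of the cited corollary, not a routine verification. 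In the reverse direction you never identify the mechanism that produces the second occurrence. The relevant structural fact is that, writing $u_r\equiv a\hat{u}_r c\hat{u}_r^{-1}$ with $c=b^{\pm1}$ or $a^{-1}$, the two components of the wheel are $(\hat{u}_r\, c\, \hat{u}_r^{-1}\, a)$ and $(\hat{u}_r\, c^{-1}\, \hat{u}_r^{-1}\, a^{-1})$: they differ only in the exponents of the two isolated ``connector'' letters. Hence every subword avoiding both connectors recurs verbatim in the other component and is automatically a piece; the whole difficulty is the complementary case of subwords containing a connector, for which your truncated-block prototype is a single instance (it cannot produce, say, the second occurrence of a piece that contains a connector, which must come from the half-turn swap symmetry instead). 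As written, the proposal is a plausible outline whose ``hard part'' is exactly the theorem, and the justifications sketched for that part are insufficient as stated.
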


\section{Proof of Theorem~\ref{thm:simplest_case}}
\label{sec:proof}

In this section, for brevity of notation, we sometimes
write $\bar{x}$ for $x^{-1}$ for a letter or a word $x$.
For a quotient group $H$ of the free group $F(a,b)$
and two elements $w_1$ and $w_2$ of $F(a,b)$,
the symbol $w_1=_H w_2$ means the equality in the group $H$.

For $r_0=[4,3,3]$, we have
by using Lemma~\ref{lem:relation}
\[
CS(u_{r_0})=CS(r_0)=\lp 5,4,4,4,5,4,4,5,4,4,5,4,4,4,5,4,4,5,4,4\rp.
\]
Let $G_0=\langle a, b \svert u_{r_0}=1 \rangle$.
Also let
$X \equiv a \cdots \bar{a}$ be
the alternating word in $\{a,b\}$
such that $S(X)=(4,4,4,5,4,4)$,
and let $f: F(a,b) \rightarrow F(a,b)$ be the homomorphism
defined by $f(a)=\bar{X}$ and $f(b)=\bar{b}$.

\begin{lemma}
\label{lem:claim1}
Under the foregoing notation,
let $\tilde{f}: F(a,b) \rightarrow G_0$ be the composition of $f$
and the canonical surjection $F(a,b) \rightarrow G_0$.
Then $\tilde{f}$ is onto.
\end{lemma}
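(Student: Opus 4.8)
The plan is to reduce the assertion to a single membership statement and then to extract that statement from the defining relation of $G_0$. Since $f(b)=\bar b$, we have $\tilde f(\bar b)=b$, so $b$ lies in the image of $\tilde f$. As $\tilde f$ is a homomorphism out of the free group $F(a,b)$, its image is the subgroup $\langle \bar X,\bar b\rangle$ of $G_0$ generated by $\tilde f(a)=\bar X$ and $\tilde f(b)=\bar b$; and since $G_0=\langle a,b\rangle$, surjectivity is equivalent to $a\in\langle \bar X,\bar b\rangle$. Thus everything comes down to producing $a$ inside this subgroup.

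First I would put the lone relation to work. Writing $u_{r_0}\equiv a\hat u b\hat u^{-1}$, the relation $u_{r_0}=_{G_0}1$ is equivalent to $\hat u^{-1}a\hat u=_{G_0}\bar b$, that is, $a=_{G_0}\hat u\,\bar b\,\hat u^{-1}$. Because $\bar b=f(b)$ already lies in $\langle \bar X,\bar b\rangle$, this reduces the task to showing $\hat u\in\langle \bar X,\bar b\rangle$. The role of the prescribed word $X$ now becomes visible: its block pattern $S(X)=(4,4,4,5,4,4)$ is tailored to occur inside $\hat u$. Indeed, matching this pattern against $CS(r_0)$ (computed above by Lemma~\ref{lem:relation}) shows that $\hat u$ factors as $\hat u\equiv P\bar X$, where $\bar X=f(a)$ and $P$ is the shorter prefix with $S(P)=(4,4,4,5)$. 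Consequently $\hat u\in\langle \bar X,\bar b\rangle$ if and only if the residual word $P$ does, and then $a=_{G_0}P\,(\bar X\bar b X)\,\bar P$ exhibits $a$ explicitly, since the conjugate $\bar X\bar b X=f(ab\bar a)$ already lies in $\langle \bar X,\bar b\rangle$.

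The hard part will be exactly this residual step: showing $P\in\langle \bar X,\bar b\rangle$. This cannot be read off formally, because $P$ is not freely a word in $\bar X$ and $\bar b$ (its length and its letters forbid that, and one checks via the small cancellation condition of Proposition~\ref{prop:small_cancellation} that $P$ is not even equal in $G_0$ to a short candidate such as $\bar X$); so the membership must again be wrung from the relation. Here I would apply $u_{r_0}=_{G_0}1$ a second time, locating $P$ relative to the two occurrences of the $S(\bar X)$-pattern in the cyclic word $(u_{r_0})$—the two being interchanged by the involution exchanging $a$ and $b$—and using this to rewrite $P$ as a word in $\bar X$ and $\bar b$; the finiteness of the continued fraction $[4,3,3]$ is what guarantees that this unwinding terminates.

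Carrying out this reduction amounts to producing an explicit word $v\in F(a,b)$ with $\tilde f(v)=_{G_0}a$, and verifying the equality by confirming that $\tilde f(v)\bar a$ lies in the normal closure of $u_{r_0}$. Since $G_0$ satisfies $C(4)$ and $T(4)$, such a verification can in principle be settled by a van Kampen diagram argument, so the only genuinely delicate point is the combinatorial identification of $P$ inside $(u_{r_0})$; once that is in hand, surjectivity of $\tilde f$ follows immediately.
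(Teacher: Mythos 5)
Your opening reduction is sound, and your factorization claim is in fact correct: writing $u_{r_0}\equiv a\hat{u}b\hat{u}^{-1}$ (here $r_0=[4,3,3]=10/43$, so $p=43$ is odd and $q=10$ is even), one has $S(\hat{u})=(4,4,4,5,4,4,5,4,4,4)$, and since an alternating word is determined by its $S$-sequence and initial letter, the terminal segment of $\hat{u}$ of length $25$ is exactly $\bar{X}=f(a)$; hence $\hat{u}\equiv P\bar{X}$ with $S(P)=(4,4,4,5)$ and
\[
a=_{G_0}\hat{u}\,\bar{b}\,\hat{u}^{-1}=P\,(\bar{X}\bar{b}X)\,P^{-1}=P\,f(ab\bar{a})\,P^{-1}.
\]
So the lemma would indeed follow from the single membership $P\in\langle\bar{X},\bar{b}\rangle\le G_0$. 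But that membership is exactly where your proof stops, and what you offer in its place is not an argument. ``Apply the relation a second time, locating $P$ relative to the two occurrences of the $S(\bar{X})$-pattern'' names no operation: all one can ever do with the single relation is insert conjugates of $u_{r_0}^{\pm 1}$, and cyclically permuting the relator $aP\bar{X}bXP^{-1}$ only yields conjugate restatements of the identity displayed above; you give no reason why any such insertion rewrites $P$ as a word in $\bar{X}$ and $\bar{b}$. Likewise, ``the finiteness of the continued fraction $[4,3,3]$ guarantees that the unwinding terminates'' invokes no decreasing quantity and sets up no induction. You concede the point yourself: completing the proof ``amounts to producing an explicit word $v\in F(a,b)$ with $\tilde{f}(v)=_{G_0}a$'' --- but producing and verifying such a word \emph{is} the content of the lemma, and it is absent from your proposal.

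For comparison, the paper closes precisely this gap by exhibiting the preimage word explicitly and verifying it with the combinatorial tools of Section~\ref{sec:preliminaries}: take $w\equiv a\cdots\bar{a}$ alternating with $S(w)=(3,3,3,4,3,3)$, i.e.\ the sequence $S(X)$ with $3,4$ in place of $4,5$. A direct computation of $S(f(w))$ from the blocks $S(\bar{X}\bar{b})$, $S(bX)$, $S(b\bar{X}\bar{b})$, $S(bX\bar{b})$ shows $f(w)\equiv v_1v_2\bar{a}v_3$, where each $v_i$ is cyclically alternating with $CS(v_i)=CS(r_0)$; by \cite[Lemma~3.2]{lee_sakuma} this forces $(v_i)\equiv(u_{r_0}^{\pm 1})$, hence $v_i=_{G_0}1$ and $f(w)=_{G_0}\bar{a}$. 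Note that the triviality of the $v_i$ is certified by that identification lemma, not by the $C(4)$--$T(4)$ machinery: small cancellation theory is what the paper uses to prove words are \emph{non}trivial (as in Lemma~\ref{lem:claim6}), and your appeal to it ``in principle'' for the verification is an admission that the computation has not been carried out. Your outline would become a proof only after supplying exactly such an explicit word and a relator-by-relator check --- for instance, an explicit $v$ with $f(v)=_{G_0}P$, which you never produce.
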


\begin{proof}
Since $\tilde{f}(b)=\bar{b}$,
it suffices to show that
$a\in G_0$
is contained in the image of $\tilde{f}$.
Let $w \equiv a \cdots \bar{a}$ be
the alternating word in $\{a,b\}$
such that $S(w)=(3,3,3,4,3,3)$.
Then
\[
f(w)=\bar{X}\bar{b} \bar{X} bX b\bar{X}\bar{b} \bar{X} bX bX\bar{b} \bar{X}\bar{b} X bX.
\]
Here, since $X \equiv a \cdots \bar{a}$ and $\bar{X} \equiv a \cdots \bar{a}$
are alternating words in $\{a,b\}$,
we see that $f(w) \equiv a \cdots \bar{a}$ is also
an alternating word in $\{a,b\}$ with
\[
S(f(w))=(S(\bar{X}\bar{b}), S(\bar{X}), S(bX), S(b\bar{X}\bar{b}),
S(\bar{X}), S(bX), S(bX\bar{b}), S(\bar{X}\bar{b}), S(X), S(bX)).
\]
Since $S(X)=(4,4,4,5,4,4)$ and $S(\bar{X})=(4,4,5,4,4,4)$,
we have $S(\bar{X}\bar{b})=(4,4,5,4,4,5)$, $S(bX)=(5,4,4,5,4,4)$,
$S(b\bar{X}\bar{b})=(5,4,5,4,4,5)$ and $S(bX\bar{b})=(5,4,4,5,4,5)$, so that
\[
\begin{aligned}
S(f(w))&=(4,4,5,4,4,5, 4,4,5,4,4,4, 5,4,4,5,4,4, 5,4,5,4,4,5,
4,4,5,4,4,4, \\
& \qquad 5,4,4,5,4,4, 5,4,4,5,4,5, 4,4,5,4,4,5, 4,4,4,5,4,4, 5,4,4,5,4,4).
\end{aligned}
\]
Letting $v_1 \equiv a \cdots \bar{b}$,
$v_2 \equiv a \cdots \bar{b}$,
and $v_3 \equiv b \cdots \bar{a}$ be
the cyclically alternating words in $\{a,b\}$ such that
\[
\begin{aligned}
S(v_1)&=(4,4,5,4,4,5, 4,4,5,4,4,4, 5,4,4,5,4,4,5,4),\\
S(v_2)&=(5,4,4,5,4,4,5,4,4,4,5,4,4,5,4,4, 5,4,4,4),\\
S(v_3)&=(4,5, 4,4,5,4,4,5, 4,4,4,5,4,4, 5,4,4,5,4,4),
\end{aligned}
\]
we see that $f(w) \equiv v_1 v_2 \bar{a} v_3$.
Moreover, for each $i=1,2,3$, since $CS(v_i)=CS(r_0)$,
$(v_i) \equiv (u_{r_0}^{\pm 1})$ as cyclic words
by \cite[Lemma~3.2]{lee_sakuma},
which implies that $v_i=_{G_0} 1$.
Hence $f(w)=_{G_0} \bar{a}$,
and thus
$a\in G_0$ is contained in the image of $\tilde{f}$, as required.
\end{proof}

At this point, we set up the following notation which will be used
at the end of the proofs of Lemmas~\ref{lem:claim2} and \ref{lem:claim3}.

\begin{notation}
\label{not:TVsequences}
{\rm
(1) Suppose that $v$ is an alternating word in
$\{a,b\}$ such that there is a sequence $(t_1,t_2,\dots,t_s)$ of positive integers satisfying
\[
S(v)=(\epsilon_1\langle 5 \rangle, t_1\langle 4\rangle, 5, t_2\langle 4\rangle,
\dots,5, t_s\langle 4\rangle, \epsilon_2\langle 5 \rangle),
\]
where $\epsilon_i$ is $0$ or $1$ for $i=1,2$.
Then the symbol $T(v)$ denotes the sequence
$(t_1,t_2,\dots,t_s)$.

(2) Suppose that $v$ is a cyclically alternating word in
$\{a,b\}$ such that there is a cyclic sequence
$\lp t_1,t_2,\dots,t_s \rp$ of positive integers satisfying
\[
CS(v)=\lp 5, t_1\langle 4\rangle, 5, t_2\langle 4\rangle,
\dots,5, t_s\langle 4\rangle \rp.
\]
Then the symbol $CT(v)$ denotes the cyclic sequence
$\lp t_1,t_2,\dots,t_s \rp$.
In particular,
by Lemma~\ref{lem:relation},
if $v \equiv u_r$ for some
$r=[4, m_2, \dots, m_k]$ with $m_2 \ge 2$,
then
$CT(u_r)=CT(r)=CS(r')$, where $r'=[m_2-1, \dots, m_k]$.

(3) Suppose that $v$ is an alternating word in
$\{a,b\}$ such that there is a sequence $(h_1,h_2,\dots,h_p)$ of positive integers satisfying
\[
T(v)=(\epsilon_1\langle 2 \rangle, h_1\langle 1\rangle, 2, h_2\langle 1\rangle,
\dots,2, h_p\langle 1\rangle, \epsilon_2\langle 2 \rangle),
\]
where $T(v)$ is defined as in (1) and $\epsilon_i$ is $0$ or $1$ for $i=1,2$.
Then the symbol $V(v)$ denotes the sequence
$(h_1,h_2,\dots,h_p)$.

(4) Suppose that $v$ is a cyclically alternating word in
$\{a,b\}$ such that there is a cyclic sequence $\lp h_1,h_2,\dots,h_p \rp$
of positive integers satisfying
\[
CT(v)=\lp 2, h_1\langle 1\rangle, 2, h_2\langle 1\rangle,
\dots,2, h_p\langle 1\rangle \rp,
\]
where $CT(v)$ is defined as in (2).
Then the symbol $CV(v)$ denotes the cyclic sequence
$\lp h_1,h_2,\dots,h_p \rp$.
In particular,
by Lemma~\ref{lem:relation},
if $v \equiv u_r$ for some
$r=[4, 2, m_3, \dots, m_k]$ with $m_3 \ge 2$,
then
$CV(u_r)=CT(r')=CS(r'')$, where $r'=[1, m_3, \dots, m_k]$
and $r''=[m_3-1, \dots, m_k]$.
}
\end{notation}

\begin{lemma}
\label{lem:claim2}
Under the foregoing notation,
$(f(u_{r_0}))=_{G_0} (u_{r_1}^{\pm 1})$.
\end{lemma}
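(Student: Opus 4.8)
The plan is to compute the cyclic word $(f(u_{r_0}))$ explicitly and then simplify it inside $G_0$ by discarding subwords that represent cyclic conjugates of $u_{r_0}^{\pm 1}$, exactly in the spirit of the proof of Lemma~\ref{lem:claim1}. First I would substitute $f(a)=\bar X$, $f(\bar a)=X$, $f(b)=\bar b$, $f(\bar b)=b$ into each letter of the cyclically alternating word $u_{r_0}$. Since both $X$ and $\bar X$ begin with $a$ and end with $\bar a$, while the images of the $b$-letters are the single letters $b^{\pm 1}$, no free cancellation occurs at any junction and alternation is preserved; hence $f(u_{r_0})$ is already cyclically reduced and cyclically alternating. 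The only effect at a junction is that each image of a $b$-letter merges with an adjacent negative (resp.\ positive) length-$4$ block of an $X$- or $\bar X$-image to create a length-$5$ block, so that $CS(f(u_{r_0}))$ is again a cyclic sequence of $4$'s and $5$'s. (A quick consistency count: the $43$ images of $a$-letters contribute $215$ fours and $43$ fives, and the $43$ merges convert $43$ fours into fives, giving $172$ fours and $86$ fives, i.e.\ total length $1118$.) I would record this sequence together with its derived $T(\,\cdot\,)$ and $V(\,\cdot\,)$ sequences in the sense of Notation~\ref{not:TVsequences}.

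Next I would locate inside the large cyclically alternating word $f(u_{r_0})$ a family of consecutive linear subwords $v_1,\dots,v_N$, each of length $|u_{r_0}|=86$ and with $CS(v_i)=CS(r_0)$. Each such $v_i$ has even length, so its two ends carry different generators and its cyclic closure is still alternating; by the fact that a cyclically alternating word is determined up to inversion by its $CS$-sequence (\cite[Lemma~3.2]{lee_sakuma}) we get $(v_i)\equiv(u_{r_0}^{\pm 1})$, so $v_i$ is a cyclic permutation of $u_{r_0}^{\pm 1}$ and therefore $v_i=_{G_0}1$. Deleting a subword that is trivial in $G_0$ leaves the represented element, and hence the conjugacy class, unchanged; so after removing $v_1,\dots,v_N$ we obtain a residual cyclic word $Y$ with $(f(u_{r_0}))=_{G_0}(Y)$. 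The length bookkeeping $|f(u_{r_0})|=43\cdot 25+43=1118=258+10\cdot 86$ is consistent with removing ten copies and leaving $|Y|=258=|u_{r_1}|$.

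Finally I would identify $(Y)$ with $(u_{r_1}^{\pm 1})$ by showing $CS(Y)=CS(r_1)$ and invoking the uniqueness fact once more. Rather than manipulate the full $CS$-sequences, I expect the efficient route is to pass to the derived $CT$- and $CV$-sequences of Notation~\ref{not:TVsequences} and to use the continued-fraction relations of Lemmas~\ref{lem:induction1} and~\ref{lem:relation} together with $r_1=[4,2,4,3]$; this reduces the comparison $CS(Y)=CS(r_1)$ to matching the short sequences $CV(Y)$ and $CV(u_{r_1})=CS([3,3])$. Combining the three steps then yields $(f(u_{r_0}))=_{G_0}(Y)=(u_{r_1}^{\pm 1})$.

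I expect the genuine difficulty to lie in the middle step: pinpointing the exact positions of the copies $v_1,\dots,v_N$ inside the enormous word $f(u_{r_0})$ and verifying that what remains reassembles, letter by letter, into a cyclically alternating word whose $CS$-sequence is precisely $CS(r_1)$. The self-similar design of $f$, with $S(X)=(4,4,4,5,4,4)$ mirroring the block structure of $CS(r_0)$, is exactly what makes these copies appear, but keeping track of the block merges at the junctions and of the orientations $(u_{r_0})$ versus $(u_{r_0}^{-1})$ is the delicate bookkeeping that the $T$- and $V$-sequence notation of Notation~\ref{not:TVsequences} is designed to control.
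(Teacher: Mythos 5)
Your overall strategy is the same as the paper's --- apply $f$, excise conjugates of $u_{r_0}^{\pm 1}$ (which are trivial in $G_0$), and identify the residual cyclic word with $(u_{r_1}^{\pm 1})$ via the $CS$/$CT$/$CV$ reconstruction --- and your bookkeeping is accurate: $|f(u_{r_0})|=1118$, and exactly ten relator copies must be removed (the paper's proof removes $2+2+1+2+2+1=10$ of them). But the proposal has a genuine gap, and it is precisely the step you yourself flag as ``the genuine difficulty'': you never locate the ten copies, never show they can be chosen pairwise disjoint in positions where the complementary arcs rejoin without cancellation or uncontrolled block merging, and never compute $CS(Y)$. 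That is not peripheral bookkeeping; it is the entire mathematical content of the lemma. The length identity $1118=258+10\cdot 86$ guarantees nothing by itself: a deletion at a wrong position still preserves the $G_0$-conjugacy class, but the resulting word need not even be alternating after free reduction, and its $CS$-sequence need not consist of $4$'s and $5$'s, so the concluding identification $(Y)\equiv(u_{r_1}^{\pm 1})$ could not be read off. Likewise, your final reduction to comparing $CV$-sequences presupposes that $CS(Y)$ consists of $4$'s and $5$'s with isolated $5$'s and that $CT(Y)$ consists of $1$'s and $2$'s with isolated $2$'s --- otherwise $CT(Y)$ and $CV(Y)$ are not even defined --- and only an explicit computation provides this.

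For comparison, the device that makes the computation tractable in the paper is a local decomposition your plan lacks: cut the cyclic word $(u_{r_0})$ at the midpoints of its six length-$5$ syllables into explicit subwords $v_1,\dots,v_6$ (with $v_5\equiv v_2^{-1}$, $v_6\equiv v_3^{-1}$), so that the images $f(v_n)$ are built from very few patterns, and then prove two local identities (Claims~1 and 2 in the paper's proof): $\bar X\bar b\bar X bXbX\bar b=_{G_0}z_1$ with $S(z_1)=(4,5,4,5)$, and $\bar X\bar b\bar X\bar b XbX=_{G_0}z_2$ with $S(z_2)=(4,4,5,4)$. These two identities account for all ten excisions, and in each of them the excised relator copy ($y_1$, resp.\ $y_2$) is a prefix, resp.\ suffix, of an explicitly computed alternating word, split off at a block boundary, so no junction analysis is ever needed. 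The residual words $w_n'\in\{z_1^2,\ z_2z_1^{-1},\ bX\bar b z_1,\ z_2z_1^{-1}b\bar X\bar b,\ \dots\}$ are then short and explicit, one checks directly that $CS(w_1'\cdots w_6')=\lp S(w_1'),\dots,S(w_6')\rp$, and $CV(w_1'\cdots w_6')=\lp 4,3,3,4,3,3\rp=CS([3,3])$ pins down $r=[4,2,4,3]=r_1$ exactly as you envisage in your third paragraph. To turn your proposal into a proof you would need to supply this (or an equivalent) explicit decomposition; as written, it is a plan whose decisive verification is missing.
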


\begin{proof}
Recall that
\[
CS(u_{r_0})=CS(r_0)=\lp 5,4,4,4,5,4,4,5,4,4,5,4,4,4,5,4,4,5,4,4\rp.
\]
Clearly the cyclic word $(u_{r_0})$ has six positive or negative subwords of length $5$.
Cutting in the middle of such subwords,
we may write the cyclic word $(u_{r_0})$ as a product
$(v_1 \cdots v_6)$, where
\[
\begin{aligned}
v_1& \equiv aba \bar{b}\bar{a}\bar{b}\bar{a} baba \bar{b}\bar{a}\bar{b}\bar{a} b,\\
v_2& \equiv abab \bar{a}\bar{b}\bar{a}\bar{b} abab \bar{a}\bar{b}\bar{a}, \\
v_3&\equiv \bar{b}\bar{a} baba \bar{b}\bar{a}\bar{b}\bar{a} b, \\
v_4&\equiv abab \bar{a}\bar{b}\bar{a}\bar{b} abab \bar{a}\bar{b}\bar{a}\bar{b} ab,\\
v_5&\equiv v_2^{-1}\equiv aba \bar{b}\bar{a}\bar{b}\bar{a} baba \bar{b}\bar{a}\bar{b}\bar{a},\\
v_6&\equiv v_3^{-1}\equiv \bar{b} abab \bar{a}\bar{b}\bar{a}\bar{b} ab.
\end{aligned}
\]

Put
$w_n:\equiv f(v_n)$ for every $n=1, \dots, 6$, namely
\[
\begin{aligned}
w_1& :\equiv \bar{X}\bar{b}\bar{X}bXbX\bar{b}\bar{X}\bar{b}\bar{X}bXbX\bar{b},\\
w_2& :\equiv \bar{X}\bar{b}\bar{X} \bar{b}XbXb\bar{X}\bar{b}\bar{X}\bar{b}XbX, \\
w_3& :\equiv bX\bar{b}\bar{X}\bar{b}\bar{X}bXbX \bar{b}, \\
w_4& :\equiv \bar{X}\bar{b}\bar{X}\bar{b}XbXb\bar{X}\bar{b}\bar{X}\bar{b}XbXb\bar{X} \bar{b},\\
w_5& :\equiv w_2^{-1} \quad \textrm{and} \quad w_6 :\equiv w_3^{-1}.
\end{aligned}
\]
It then follows that
\[
(f(u_{r_0}))=(f(v_1 \cdots v_6))=(w_1 \cdots w_6).
\]

\medskip
\noindent{\bf Claim 1.}
{\it
$\bar{X}\bar{b}\bar{X}bXbX\bar{b}=_{G_0} z_1$,
where $z_1 \equiv a \cdots \bar{b}$ is an alternating word in $\{a,b\}$
with $S(z_1)=(4,5,4,5)$.
}

\begin{proof}[Proof of Claim 1]
Recall that $X \equiv a \cdots \bar{a}$ and $\bar{X} \equiv a \cdots \bar{a}$
are alternating words in $\{a,b\}$
such that $S(X)=(4,4,4,5,4,4)$ and $S(\bar{X})=(4,4,5,4,4,4)$.
It is not hard to see that
\[
\begin{aligned}
S(\bar{X}\bar{b}\bar{X}bXbX\bar{b})&=(S(\bar{X}\bar{b}), S(\bar{X}), S(bX), S(bX\bar{b}))\\
&=((4,4,5,4,4,5),(4,4,5,4,4,4),(5,4,4,5,4,4),(5,4,4,5,4,5))\\
&=(4,4,5,4,4,5,4,4,5,4,4,4,5,4,4,5,4,4,5,4,4,5,4,5).
\end{aligned}
\]
Letting $y_1 \equiv a \cdots \bar{b}$ and $z_1 \equiv a \cdots \bar{b}$
be alternating words in $\{a,b\}$ such that
$S(y_1)=(4,4,5,4,4,5,4,4,5,4,4,4,5,4,4,5,4,4,5,4)$
and $S(z_1)=(4,5,4,5)$,
clearly
$\bar{X}\bar{b}\bar{X}bXbX\bar{b} \equiv y_1z_1$.
Here, since $CS(y_1)=CS(r_0)$ and so $y_1 =_{G_0} 1$,
we finally have
$\bar{X}\bar{b}\bar{X}bXbX\bar{b} \equiv y_1z_1 =_{G_0} z_1$,
as required.
\end{proof}

\medskip
\noindent{\bf Claim 2.}
{\it
$\bar{X}\bar{b}\bar{X} \bar{b}XbX=_{G_0} z_2$,
where $z_2 \equiv a \cdots \bar{a}$ is
the alternating word in $\{a,b\}$ with $S(z_2)=(4,4,5,4)$.
}

\begin{proof}[Proof of Claim 2]
As in the proof of Claim~1, we have
\[
\begin{aligned}
S(\bar{X}\bar{b}\bar{X} \bar{b}XbX)&=(S(\bar{X}\bar{b}), S(\bar{X} \bar{b}), S(X), S(bX))\\
&=((4,4,5,4,4,5), (4,4,5,4,4,5), (4,4,4,5,4,4), (5,4,4,5,4,4))\\
&=(4,4,5,4,4,5,4,4,5,4,4,5,4,4,4,5,4,4,5,4,4,5,4,4).
\end{aligned}
\]
Letting $z_2 \equiv a \cdots \bar{a}$ and
$y_2 \equiv b \cdots \bar{a}$ be alternating words in $\{a,b\}$
such that $S(z_2)=(4,4,5,4)$ and $S(y_2)=(4,5,4,4,5,4,4,5,4,4,4,5,4,4,5,4,4,5,4,4)$,
clearly
$\bar{X}\bar{b}\bar{X} \bar{b}XbX \equiv z_2y_2$.
Here, since $CS(y_2)=CS(r_0)$ and so $y_2 =_{G_0} 1$,
we finally have
$\bar{X}\bar{b}\bar{X} \bar{b}XbX \equiv z_2y_2 =_{G_0} z_2$,
as required.
\end{proof}

By Claims~1 and 2, it follows that
\[
\begin{aligned}
w_1&=_{G_0} z_1^2 \equiv : w_1', \\
w_2&=_{G_0} z_2 z_1^{-1} \equiv : w_2', \\
w_3&=_{G_0} bX\bar{b} z_1 \equiv : w_3', \\
w_4&=_{G_0} z_2 z_1^{-1} b\bar{X} \bar{b} \equiv : w_4',\\
w_5& = w_2^{-1} =_{G_0} (w_2')^{-1} \equiv : w_5', \\
w_6& = w_3^{-1} =_{G_0} (w_3')^{-1} \equiv : w_6',
\end{aligned}
\]
so that
\[
(f(u_{r_0}))=(w_1 \cdots w_6)=_{G_0} (w_1' \cdots w_6').
\]
Moreover, we see that
$w_1' \equiv a \cdots \bar{b}$, $w_2' \equiv a \cdots \bar{a}$,
$w_3' \equiv b \cdots \bar{b}$, $w_4' \equiv a \cdots \bar{b}$,
$w_5' \equiv a \cdots \bar{a}$ and $w_6' \equiv b \cdots \bar{b}$
are alternating words in $\{a,b\}$ such that
\[
\begin{aligned}
S(w_1')&=(S(z_1), S(z_1))=(4,5,4,5,4,5,4,5),\\
S(w_2')&=(S(z_2), S(z_1^{-1}))=(4,4,5,4,5,4,5,4),\\
S(w_3')&=(S(bX\bar{b}), S(z_1))=(5,4,4,5,4,5,4,5,4,5),\\
S(w_4')&=(S(z_2), S(z_1^{-1}),S(b\bar{X} \bar{b}))=(4,4,5,4,5,4,5,4,5,4,5,4,4,5),\\
S(w_5')&=S((w_2')^{-1})=(4,5,4,5,4,5,4,4),\\
S(w_6')&=S((w_3')^{-1})=(5,4,5,4,5,4,5,4,4,5).
\end{aligned}
\]
This implies that
\[
CS(w_1' \cdots w_6')=\lp S(w_1'), \dots, S(w_6') \rp.
\]
Following Notation~\ref{not:TVsequences}, we also have
\[
\begin{aligned}
T(w_1')&=(1,1,1,1), \quad T(w_2')=(2,1,1,1),\\
T(w_3')&=(2,1,1,1), \quad T(w_4')=(2,1,1,1,1,2),\\
T(w_5')&=(1,1,1,2), \quad T(w_6')=(1,1,1,2),
\end{aligned}
\]
and that
\[
CT(w_1' \cdots w_6')=\lp T(w_1'), \dots, T(w_6') \rp.
\]
We furthermore have
\[
\begin{aligned}
V(w_1')&=(4), \quad V(w_2')=(3), \quad V(w_3')=(3),\\
V(w_4')&=(4), \quad V(w_5')=(3), \quad V(w_6')=(3),
\end{aligned}
\]
and
\[
CV(w_1' \cdots w_6')=\lp V(w_1'), \dots, V(w_6') \rp =\lp 4,3,3,4,3,3 \rp.
\]

Since $\lp 4,3,3,4,3,3 \rp$ is the $CS$-sequence corresponding to the
rational number $[3,3]$,
we see that
\[
(w_1' \cdots w_6') \equiv (u_{r}^{\pm 1})
\]
for some rational number $r$ with $r''=[3,3]$.
For this rational number $r$,
since $CS(r')=CT(r)=CT(w_1' \cdots w_6')$ consists of $1$ and $2$,
we have $r'=[1,4,3]$.
Furthermore since $CS(r)=CS(w_1' \cdots w_6')$ consists of $4$ and $5$,
we finally have $r=[4,2,4,3]$ which equals $r_1$ in the statement of the theorem.
This completes the proof of Lemma~\ref{lem:claim2}.
\end{proof}

\begin{lemma}
\label{lem:claim3}
Under the foregoing notation,
$(f(u_{r_i}))=_{G_0} (u_{r_{i+1}}^{\pm 1})$ for every $i \ge 1$.
\end{lemma}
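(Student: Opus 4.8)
The plan is to mimic the structure of the proof of Lemma~\ref{lem:claim2}, but now working at the level of an arbitrary $r_i$ with $i\ge 1$. The statement $(f(u_{r_i}))=_{G_0}(u_{r_{i+1}}^{\pm1})$ is really a claim that the self-map $f$ shifts the index of the slope sequence by one, so I would set up the computation so that the $CV$-sequence (or equivalently the bottom layer of the continued fraction data) of $f(u_{r_i})$ comes out to be exactly the data determining $r_{i+1}=[4,2,i\langle 3\rangle,4,3]$. First I would record $CS(u_{r_i})=CS(r_i)$ explicitly, using Lemma~\ref{lem:relation} applied to $r_i=[4,2,(i-1)\langle3\rangle,4,3]$, and then cut the cyclic word $(u_{r_i})$ in the middles of its length-$5$ syllables, exactly as $(u_{r_0})$ was cut into $v_1\cdots v_6$. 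Because $r_i$ has a long string of repeated $3$'s in its expansion, the resulting decomposition will consist of a few fixed ``end'' blocks together with a block that repeats $i-1$ (or $i$) times; the whole point is that $f$ interacts with each repeated block uniformly.

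The key computational engine will be the two reduction identities already isolated as Claims~1 and 2 in Lemma~\ref{lem:claim2}, namely $\bar X\bar b\bar X bXbX\bar b=_{G_0}z_1$ with $S(z_1)=(4,5,4,5)$ and $\bar X\bar b\bar X\bar b XbX=_{G_0}z_2$ with $S(z_2)=(4,4,5,4)$, since these encode how $f$ collapses the ``$5,4,4,5$''-type patterns coming from the repeated syllables. I would apply $f$ to each block of the decomposition of $(u_{r_i})$, then use these identities (and the fact that any cyclically alternating subword whose $CS$-sequence equals $CS(r_0)$ is trivial in $G_0$ by \cite[Lemma~3.2]{lee_sakuma}) to replace each $w_n=f(v_n)$ by a shorter word $w_n'$. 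The end result should again be expressible through the $T$-, $CT$-, $V$-, and $CV$-sequences of Notation~\ref{not:TVsequences}, and I would track these carefully so that $CV(f(u_{r_i}))$ comes out as the $CS$-sequence of the rational number whose doubly-reduced slope is $[3,3]$, forcing $r_{i+1}=[4,2,i\langle3\rangle,4,3]$ by peeling off the continued-fraction layers via Lemma~\ref{lem:relation} just as at the end of Lemma~\ref{lem:claim2}.

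The natural way to organize this is by induction on $i$, or more efficiently by exploiting the self-similar structure: $r_i$ and $r_{i+1}$ differ only by inserting one more $3$ in the repeated block, so the block decomposition of $(u_{r_{i+1}})$ is obtained from that of $(u_{r_i})$ by inserting one more copy of the repeating piece, and $f$ acts on that extra piece in a controlled way. I expect the main obstacle to be bookkeeping rather than conceptual: one must verify that the $f$-images of the repeated blocks splice together correctly across block boundaries (so that no unexpected cancellation or merging of syllables occurs at the seams), and that the counts of $3$'s and $4$'s in the resulting $CV$-sequence increment by exactly one. Handling the two ``end'' blocks of $(u_{r_i})$ separately from the repeated interior blocks, and checking that the boundary contributions are independent of $i$, is the delicate part. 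Once the interior blocks are shown to contribute uniformly and the endpoints are pinned down, the identification of $r_{i+1}$ follows mechanically from Lemmas~\ref{lem:properties} and \ref{lem:relation} exactly as in the case $i=0$ treated in Lemma~\ref{lem:claim2}.
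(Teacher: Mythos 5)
Your skeleton agrees with the paper's proof: cut $(u_{r_i})$ at the middles of its length-$5$ syllables, apply $f$ block by block, reduce the images in $G_0$ via the identities $\bar{X}\bar{b}\bar{X}bXbX\bar{b}=_{G_0}z_1$ and $\bar{X}\bar{b}\bar{X}\bar{b}XbX=_{G_0}z_2$ from Lemma~\ref{lem:claim2}, and recover the slope by peeling off continued-fraction layers with Lemma~\ref{lem:relation}. The genuine gap is your structural picture of the decomposition. You claim that $(u_{r_i})$ splits into ``a few fixed end blocks together with a block that repeats $i-1$ (or $i$) times,'' and that $(u_{r_{i+1}})$ is obtained from $(u_{r_i})$ by inserting one more copy of the repeating piece. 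This is false: the run of $3$'s in the continued fraction expansion does not produce linear repetition but hierarchical self-similarity. The cyclic sequence of block types is $CT(r_i)=CS(r_i')$ with $r_i'=[1,m_3,\dots,m_k]$, whose pattern is in turn governed by $CS(r_i'')$, and so on; concretely, $|u_{r_i}|=2p_i$ with $3<p_{i+1}/p_i<4$, so the number of blocks grows \emph{exponentially} in $i$, and no induction that adds a single block per step can be set up. Your statement that $CV(f(u_{r_i}))$ should come out as the $CS$-sequence attached to a number with doubly-reduced slope $[3,3]$ is a symptom of the same conflation of general $i$ with $i=0$: the slope $[3,3]$ occurs only in Lemma~\ref{lem:claim2}, while for $i\ge1$ the correct target is $CS([2,m_3,\dots,m_k])=CS([2,(i-1)\langle 3\rangle,4,3])$.

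What replaces your global bookkeeping in the paper is a purely local, $i$-independent mechanism, and this is the idea your proposal is missing. Since $CT(r_i)$ consists of $1$'s and $2$'s with isolated $2$'s (because $r_i'=[1,m_3,\dots,m_k]$), every block produced by the cutting is one of \emph{eight} explicit words $v_1,\dots,v_8$, and the admissible ways of concatenating them are encoded by a finite directed graph (Figure~\ref{fig.graph}). This yields, with no knowledge whatsoever of the global pattern: (a) the seam verification you defer as ``the delicate part'' --- whenever $v_n$ and $v_m$ span an oriented edge, the reduced images satisfy $S(w_n'w_m')=(S(w_n'),S(w_m'))$ with the $5$'s isolated, so no syllables merge at the seams; and (b) the counting identity $V(w_n')=(N(v_n)+1)$, where $N(v_n)$ is the number of interior length-$4$ syllables of $v_n$. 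Since $\lp N(v_{i,1}),\dots,N(v_{i,k_i})\rp=CT(r_i)=CS([1,m_3,\dots,m_k])$, identity (b) turns each entry $t$ into $t+1$, so $CV(f(u_{r_i}))=CS([2,m_3,\dots,m_k])$, and reconstructing the layers upward forces $r=[4,2,3,m_3,\dots,m_k]=r_{i+1}$. Without the finite block alphabet and the graph (or some equivalent local device), the outline you give cannot be completed as stated.
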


\begin{proof}
Fix $i \ge 1$.
Then $r_i=[4, 2, m_3, \dots, m_k]$ with $m_3 \ge 3$.
By Lemma~\ref{lem:properties}(2),
$CS(r_i)$ consists of $4$ and $5$ without $(5,5)$.
Moreover, since $r_i'=[1, m_3, \dots, m_k]$,
by Lemmas~\ref{lem:properties}(2) and \ref{lem:induction1},
$CT(r_i)=CS(r_i')$ consists of $1$ and $2$,
which implies that the number of occurrences of $4$'s between any two $5$'s is one or two.

\medskip
\noindent{\bf Claim.}
{\it
By cutting the cyclic word $(u_{r_i})$
in the middle of each positive or negative subwords of length $5$,
we may write $(u_{r_i})$ as a product
$(v_{i,1} \cdots v_{i,k_i})$,
where each $v_{i,j}$ is one of the following:
\[
\begin{aligned}
v_1& \equiv \bar{b} abab \bar{a}\bar{b}\bar{a}\bar{b} ab,\\
v_2& \equiv v_1^{-1} \equiv \bar{b}\bar{a} baba \bar{b}\bar{a}\bar{b}\bar{a} b,\\
v_3& \equiv abab \bar{a}\bar{b}\bar{a}\bar{b} abab \bar{a}\bar{b}\bar{a},\\
v_4& \equiv v_3^{-1} \equiv aba \bar{b}\bar{a}\bar{b}\bar{a} baba \bar{b}\bar{a}\bar{b}\bar{a},\\
v_5& \equiv \bar{b} abab \bar{a}\bar{b}\bar{a},\\
v_6& \equiv \bar{b}\bar{a} baba\bar{b}\bar{a}\bar{b}\bar{a},\\
v_7& \equiv v_6^{-1} \equiv abab \bar{a}\bar{b}\bar{a}\bar{b} ab,\\
v_8& \equiv v_5^{-1} \equiv aba \bar{b}\bar{a}\bar{b}\bar{a} b.
\end{aligned}
\]
}

\begin{proof}[Proof of Claim]
Note that for every $n=1, \dots, 8$,
$v_n$ is an alternating word in $\{a,b\}$
such that $S(v_n)=(k_n, t_n\langle 4 \rangle, \ell_n)$,
where $t_n\in \{1,2\}$ and $k_n, \ell_n \in\{1,2,3,4\}$.
Consider the graph as in Figure~\ref{fig.graph},
where the vertex set is equal to $\{v_1, \dots, v_8\}$
and each edge is endowed with one or two orientations.
Observe that if $v_n$ and $v_m$ are the initial and terminal vertices, respectively,
of an oriented edge of the graph, then
the word $v_nv_m$ is an alternating word such that
$S(v_nv_m)=(k_n, t_n\langle 4\rangle, 5, t_m\langle 4\rangle, \ell_m)$,
namely,
the terminal subword of $v_n$,
corresponding to the last component $\ell_n$ of $S(v_n)$, and
the initial subword of $v_m$,
corresponding to the first component $k_m$ of $S(v_m)$,
are amalgamated into
a maximal positive or negative alternating subword of $v_nv_m$,
of length $5$.
Moreover, the weight $t_n$ (resp. $t_m$) is $1$ or $2$
according to whether the vertex $v_n$ (resp. $v_m$)
has valence $3$ or $2$.
Thus, if $v_{n_1}, v_{n_2}, \dots, v_{n_p}$, where $v_{n_j} \in \{v_1, \dots, v_8\}$,
is a closed edge path in the graph which is compatible with
the specified edge orientations
(a compatible closed edge path, in brief),
namely, if
$v_{n_j}$ and $v_{n_{j+1}}$ are the initial and terminal vertices
of an oriented edge of the graph for each
$j=1,2,\dots, p$, where the indices are considered modulo $p$,
then the cyclically reduced word
$v_{n_1} v_{n_2} \cdots v_{n_p}$
is a cyclically alternating word with $CS$-sequence
$\lp 5, t_{n_1}\langle 4\rangle, 5, t_{n_2}\langle 4\rangle,
5, \dots, t_{n_p}\langle 4\rangle \rp$.

\begin{figure}[h]
\begin{center}
\includegraphics{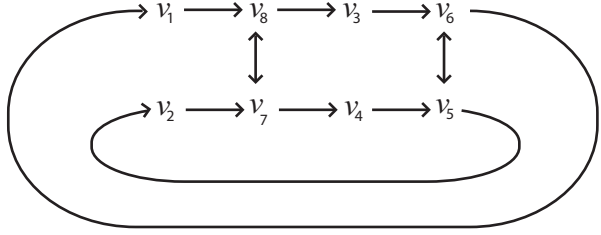}
\end{center}
\caption{\label{fig.graph}
Proof of Claim in the proof of Lemma~\ref{lem:claim3}
}
\end{figure}

Since the weight $t_{n_j}$ is $1$ or $2$
according to whether the vertex $v_{n_j}$ has valence $3$ or $2$,
we see that
for any compatible closed edge path,
the $CT$-sequence $\lp t_{n_1}, t_{n_1}, \dots,t_{n_p} \rp$
of the corresponding cyclically alternating word
consists of only $1$ and $2$ and that it has isolated $2$'s.
Moreover, for any such cyclic sequence, we can construct
a compatible closed edge path
such that the $CT$-sequence of the corresponding cyclically alternating word
is equal to the given cyclic sequence.
In particular, we can find a compatible closed edge path
such that
the $CT$-sequence of the corresponding cyclically alternating word, $(w)$,
is equal to $CT(u_{r_i})$.
This implies that $CS(w)=CS(u_{r_i})$.
Hence
$(w) \equiv (u_{r_i}^{\pm 1})$ as cyclic words
by \cite[Lemma~3.2]{lee_sakuma}.
This completes the proof of Claim.
\end{proof}

Putting
\[
\begin{aligned}
w_1& :\equiv b \bar{X}\bar{b}\bar{X}\bar{b} XbXb \bar{X}\bar{b},\\
w_3& :\equiv\bar{X}\bar{b}\bar{X}\bar{b} XbXb \bar{X}\bar{b}\bar{X} \bar{b} XbX,\\
w_5& :\equiv b \bar{X}\bar{b}\bar{X}\bar{b} XbX,\\
w_6& :\equiv bX \bar{b}\bar{X}\bar{b}\bar{X} bXbX,\\
w_2&:\equiv w_1^{-1}, \quad w_4 :\equiv w_2^{-1}, \quad
w_7 :\equiv w_6^{-1} \quad \textrm{and} \quad w_8 :\equiv w_5^{-1},
\end{aligned}
\]
we obviously have
$f(v_n)=w_n$ for every $n=1,2, \dots, 8$,
so that
\[
(f(u_{r_i}))=(f(v_{i,1} \cdots v_{i,k_i}))=(w_{i,1} \cdots w_{i,k_i}),
\]
where each $w_{i,j} \in \{w_1, \dots, w_8\}$.

Recall from Claims~1 and 2 in the proof of Lemma~\ref{lem:claim2}
that $\bar{X}\bar{b}\bar{X}bXbX\bar{b}=_{G_0} z_1$,
where $z_1 \equiv a \cdots \bar{b}$ is
the alternating word
in $\{a,b\}$ with $S(z_1)=(4,5,4,5)$,
and that
$\bar{X}\bar{b}\bar{X} \bar{b}XbX=_{G_0} z_2$,
where $z_2 \equiv a \cdots \bar{a}$ is
the alternating word
in $\{a,b\}$ with $S(z_2)=(4,4,5,4)$.
It follows that
\[
\begin{aligned}
w_1&=_{G_0} z_1^{-1} b\bar{X}\bar{b} \equiv : w_1',\\
w_3&=_{G_0} z_2 z_1^{-1} \equiv : w_3',\\
w_5&=_{G_0} z_1^{-1} \equiv : w_5',\\
w_6&=_{G_0} bX \bar{b} z_2^{-1} \equiv : w_6',\\
w_2&=w_1^{-1}=_{G_0} (w_1')^{-1} \equiv : w_2',\\
w_4&=w_3^{-1}=_{G_0} (w_3')^{-1} \equiv : w_4'\\
w_7&=w_6^{-1}=_{G_0} (w_6')^{-1} \equiv : w_7'\\
w_8&=w_5^{-1}=_{G_0} (w_5')^{-1} \equiv : w_8'.
\end{aligned}
\]
Then we have
\[
(f(u_{r_i}))=_{G_0}(w_{i,1}' \cdots w_{i,k_i}'),
\]
where each $w_{i,j}' \in \{w_1', \dots, w_8'\}$.
Moreover
\[
\begin{aligned}
w_1' &\equiv b \cdots \bar{b}, \quad w_2' \equiv b \cdots \bar{b}, \quad
w_3' \equiv a \cdots \bar{a}, \quad w_4' \equiv a \cdots \bar{a},\\
w_5' &\equiv b \cdots \bar{a}, \quad w_6' \equiv b \cdots \bar{a}, \quad
w_7' \equiv a \cdots \bar{b}, \quad w_8' \equiv a \cdots \bar{b}
\end{aligned}
\]
are alternating words in $\{a,b\}$ such that
\[
\begin{aligned}
S(w_1')&=(S(z_1^{-1}), S(b\bar{X}\bar{b}))=(5,4,5,4,5,4,5,4,4,5)\\
S(w_3')&=(S(z_2), S(z_1^{-1}))=(4,4,5,4,5,4,5,4),\\
S(w_5')&=S(z_1^{-1})=(5,4,5,4),\\
S(w_6')&=(S(bX \bar{b}), S(z_2^{-1}))=(5,4,4,5,4,5,4,5,4,4),
\end{aligned}
\]
and $S(w_2')=S((w_1')^{-1})=(5,4,4,5,4,5,4,5,4,5)$,
$S(w_4')=S((w_3')^{-1})=(4,5,4,5,4,5,4,4)$,
$S(w_7')=S((w_6')^{-1})=(4,4,5,4,5,4,5,4,4,5)$, and
$S(w_8')=S((w_5')^{-1})=(4,5,4,5)$.

Observe in the graph in Figure~\ref{fig.graph} that
if $v_n$ and $v_m$ are the initial and terminal vertices, respectively,
of an oriented edge,
then $w_n'w_m'$ is an alternating word such that
$S(w_n'w_m')=(S(w_n'), S(w_m'))$,
which consists of $4$ and $5$, and moreover the components $5$ are isolated.
This observation yields that
\[
\begin{aligned}
CS(w_{i,1}' \cdots w_{i,k_i}')&=\lp S(w_{i,1}'), \dots, S(w_{i,k_i}') \rp,\\
CT(w_{i,1}' \cdots w_{i,k_i}')&=\lp T(w_{i,1}'), \dots, T(w_{i,k_i}') \rp.
\end{aligned}
\]
Here
\[
\begin{aligned}
T(w_1')&=(1,1,1,2), \quad T(w_2')=(2,1,1,1), \\
T(w_3')&=(2,1,1,1), \quad T(w_4')=(1,1,1,2), \\
T(w_5')&=(1,1), \quad \qquad T(w_6')=(2,1,1,2), \\
T(w_7')&=(2,1,1,2), \quad T(w_8')=(1,1).
\end{aligned}
\]
This also yields that
\[
CV(w_{i,1}' \cdots w_{i,k_i}')=\lp V(w_{i,1}'), \dots, V(w_{i,k_i}') \rp,
\]
where $V(w_n')=(3)$ if $n=1,2,3,4$, and $V(w_n')=(2)$ otherwise.

Define $N(v_n)$ to be the number of positive or negative proper subwords of $v_n$
of length $4$ for each $n=1, \dots, 8$. Here, by a proper subword of $v_n$,
we mean a subword which lies in the interior of $v_n$.
Then we see that $V(w_n')=(N(v_n)+1)$ for each $n=1, \dots, 8$.
Since $(v_{i,1} \cdots v_{i,k_i})$ is a product
being cut in the middle of each positive or negative subwords of length $5$,
we also see that
\[
\lp N(v_{i,1}), \dots, N(v_{i,k_i}) \rp=CT(r_i)=CS(r_i')
\]
with $r_i'=[1, m_3, \dots, m_k]$.
Since $V(w_{i,j}')=(N(v_{i,j})+1)$
for each $j=1, \dots, k_i$,
$CV(w_{i,1}' \cdots w_{i,k_i}')=\lp N(v_{i,1})+1, \dots, N(v_{i,k_i})+1 \rp$
is the $CS$-sequence corresponding to the rational number
$[2, m_3, \dots, m_k]$.
Hence
\[
(f(u_{r_i}))=_{G_0} (w_{i,1}' \cdots w_{i,k_i}') \equiv (u_{r}^{\pm 1})
\]
for some rational number $r$ with $r''=[2, m_3, \dots, m_k]$.
For this rational number $r$,
since $CS(r')=CT(r)=CT(w_{i,1}' \cdots w_{i,k_i}')$ consists of $1$ and $2$,
we have $r'=[1,3, m_3, \dots, m_k]$.
Furthermore, since $CS(r)=CS(w_{i,1}' \cdots w_{i,k_i}')$
consists of $4$ and $5$,
we finally have $r=[4,2,3,m_3, \dots, m_k]$
which equals $r_{i+1}$ in the statement of the theorem.
This completes the proof of Lemma~\ref{lem:claim3}.
\end{proof}

Since $G=\langle a, b \svert u_{r_0}=u_{r_1}=u_{r_2}=\cdots=1 \rangle$,
Lemmas~\ref{lem:claim1}--\ref{lem:claim3} imply that
$f$ descends to an epimorphism $\hat f: G \rightarrow G$.
Now we show that $\hat f$ is not an isomorphism.
Let $s=[3,3,4]$.
Then
\[CS(u_s)=CS(s)=\lp 3,3,3,4,3,3,4,3,3,4,3,3,4,3,3,3,4,3,3,4,3,3,4,3,3,4 \rp,
\]
so that
\[
\begin{aligned}
(u_s)& \equiv (aba \bar{b}\bar{a}\bar{b} aba \bar{b}\bar{a}\bar{b}\bar{a}
bab \bar{a}\bar{b}\bar{a} baba
\bar{b}\bar{a}\bar{b} aba \bar{b}\bar{a}\bar{b}\bar{a}
bab \bar{a}\bar{b}\bar{a} baba\\
&\qquad \bar{b}\bar{a}\bar{b} aba \bar{b}\bar{a}\bar{b} abab
\bar{a}\bar{b}\bar{a} bab \bar{a}\bar{b}\bar{a}\bar{b}
aba \bar{b}\bar{a}\bar{b} abab
\bar{a}\bar{b}\bar{a} bab \bar{a}\bar{b}\bar{a}\bar{b}).
\end{aligned}
\]
As in the proof of Lemma~\ref{lem:claim1},
letting $w \equiv a \cdots \bar{a}$ be an alternating word
in $\{a,b\}$ such that $S(w)=(3,3,3,4,3,3)$, we have
\[
(u_s) \equiv (w b w baba \bar{b}\bar{a}\bar{b} w^{-1} \bar{b} w^{-1}\bar{b}).
\]

\begin{lemma}
\label{lem:claim4}
We have $\hat f(u_s)=1$.
\end{lemma}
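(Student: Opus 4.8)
The plan is to compute $f(u_s)$ modulo the single relation $u_{r_0}$, that is, inside $G_0$, reduce it to an explicit cyclically reduced word $\Omega$, and then recognize $\Omega$ as a cyclic permutation of $u_{r_0}^{\pm1}$; since $G$ is a quotient of $G_0$, the equality $f(u_s)=_{G_0}1$ immediately yields $\hat f(u_s)=1$.

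First I would apply $f$ to the decomposition $(u_s)\equiv(w\, b\, w\, baba\, \bar b\bar a\bar b\, w^{-1}\bar b\, w^{-1}\bar b)$ recorded just above the statement, using $f(a)=\bar X$, $f(b)=\bar b$, $f(\bar a)=X$ and $f(\bar b)=b$. This gives
\[
f(u_s)=f(w)\,\bar b\,f(w)\,\bar b\bar X\bar b\bar X\,bXb\,f(w)^{-1}\,b\,f(w)^{-1}\,b .
\]
I then invoke the key computation already carried out in the proof of Lemma~\ref{lem:claim1}, namely $f(w)=_{G_0}\bar a$ (so $f(w)^{-1}=_{G_0}a$), to replace all four occurrences of $f(w)^{\pm1}$, obtaining
\[
f(u_s)=_{G_0}\bar a\bar b\bar a\bar b\,\bar X\bar b\bar X\,bXb\,abab=:\Omega .
\]

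The second step is to determine $S(\Omega)$ and confirm that $\Omega$ is cyclically reduced. Using $S(X)=(4,4,4,5,4,4)$ and $S(\bar X)=(4,4,5,4,4,4)$, I would track the syllables across the junctions of $\Omega$. The leading block $\bar a\bar b\bar a\bar b$ is a single negative run of length $4$; the first $\bar X$ ends in $\bar a$ and amalgamates with the following $\bar b$ into a negative run of length $5$; the positive letter $b$ lying between the second $\bar X$ and $X$ merges with the leading syllable of $X$ into a positive run of length $5$; and the final $b$ merges with $abab$ into another positive run of length $5$. At the cyclic seam the terminal $b$ of $abab$ and the initial $\bar a$ have opposite signs and do not merge. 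Carrying out this bookkeeping gives
\[
CS(\Omega)=\lp 4,4,4,5,4,4,5,4,4,5,4,4,4,5,4,4,5,4,4,5\rp ,
\]
which coincides with $CS(r_0)=CS(u_{r_0})$ after a single cyclic rotation.

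Finally, since $\Omega$ is a cyclically alternating word with $CS(\Omega)=CS(r_0)$, \cite[Lemma~3.2]{lee_sakuma} gives $(\Omega)\equiv(u_{r_0}^{\pm1})$ as cyclic words, so $\Omega$ is a cyclic permutation of $u_{r_0}^{\pm1}$ and hence $\Omega=_{G_0}1$. Combined with the displayed reduction this yields $f(u_s)=_{G_0}1$, and therefore $\hat f(u_s)=1$ in the quotient $G$. I expect the only delicate point to be the syllable bookkeeping of the previous paragraph: one must correctly locate the three amalgamations created by the substitution $f(w)=_{G_0}\bar a$ (one negative at $\bar X\bar b$, two positive at $bX$ and at $b\,abab$) and check that no further free cancellation occurs, so that $\Omega$ is genuinely cyclically reduced and $CS(\Omega)$ is exactly $CS(r_0)$; once this is verified, the identification with $(u_{r_0}^{\pm1})$ is formal.
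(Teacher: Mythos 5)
Your proposal is correct and follows essentially the same route as the paper's own proof: the same decomposition of $(u_s)$, the same substitution $f(w)=_{G_0}\bar{a}$ taken from the proof of Lemma~\ref{lem:claim1}, the same syllable bookkeeping showing the resulting cyclic word has $CS$-sequence equal to $CS(r_0)$, and the same appeal to \cite[Lemma~3.2]{lee_sakuma} to identify it with $(u_{r_0}^{\pm 1})$. The only cosmetic difference is that you carry out the computation in $G_0$ and then pass to the quotient $G$, whereas the paper writes the identical computation directly as equalities in $G$.
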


\begin{proof}
Clearly
\[
(f(u_s))= (f(w) \bar{b} f(w) \bar{b}\bar{X}\bar{b}\bar{X} bXb f(w^{-1}) b f(w^{-1}) b).
\]
Here, since $\hat f(w)=\bar{a}$ from the proof of Lemma~\ref{lem:claim1}, we have
\[
(f(u_s))=_G(\bar{a}\bar{b}\bar{a}\bar{b} \bar{X}\bar{b} \bar{X} bX babab),
\]
where $(\bar{a}\bar{b}\bar{a}\bar{b} \bar{X}\bar{b} \bar{X} bX babab)$
is a cyclically alternating word in $\{a,b\}$ such that
\[
\begin{aligned}
CS(\bar{a}\bar{b}\bar{a}\bar{b} \bar{X}\bar{b} \bar{X} bX babab)&=\lp S(\bar{a}\bar{b}\bar{a}\bar{b}), S(\bar{X}\bar{b}), S(\bar{X}), S(bX), S(babab)\rp,\\
&=\lp 4,(4,4,5,4,4,5),(4,4,5,4,4,4),(5,4,4,5,4,4),5 \rp
\end{aligned}
\]
which equals $CS(r_0)$.
This implies that $(f(u_s))=_G(\bar{a}\bar{b}\bar{a}\bar{b} \bar{X}\bar{b} \bar{X} bX babab)=_G 1$,
namely $\hat f(u_s)=1$, as required.
\end{proof}

\begin{lemma}
\label{lem:claim5}
Under the foregoing notation,
let $R$ be the symmetrized subset of
$F(a,b)$ generated by the set of relators
$\{u_{r_i} \svert i \ge 0\}$
of the upper presentation
$G=\langle a, b \svert u_{r_0}=u_{r_1}=u_{r_2}=\cdots=1 \rangle$.
Then $R$ satisfies $C(4)$ and $T(4)$.
\end{lemma}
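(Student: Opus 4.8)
The plan is to show that enlarging the relator set from a single $u_{r_i}$ to the whole family does not enlarge the collection of pieces lying inside any individual relator, so that both conditions descend from Proposition~\ref{prop:small_cancellation}. A piece with respect to $R$ is a common subword of two distinct elements of $R$. If those two elements are cyclic permutations or inverses of the same $u_{r_i}$, the subword is by definition already a piece with respect to the single-relator symmetrized set generated by $u_{r_i}$. The only genuinely new pieces are therefore the common subwords $w$ of $(u_{r_i}^{\pm 1})$ and $(u_{r_j}^{\pm 1})$ with $i \neq j$, and controlling these is the whole point.

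The key claim I would establish is the following: if $w$ is a common subword of $(u_{r_i}^{\pm 1})$ and $(u_{r_j}^{\pm 1})$ with $i \neq j$, and $CS(r_i) = \llangle S_1, S_2, S_1, S_2 \rrangle$ is the decomposition of Lemma~\ref{lem:sequence}, then $S(w)$ contains neither $S_1$ nor $(\ell_1, S_2, \ell_2)$ as a subsequence. By the characterization in Lemma~\ref{lem:max_piece} this says precisely that $w$ is a piece with respect to the single relator $u_{r_i}$. Granting the claim, every piece with respect to $R$ that lies inside a relator $u_{r_i}$ is a piece with respect to $u_{r_i}$ alone.

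First I would prove the claim by contradiction, using Lemma~\ref{lem:useful} as the descent step. Suppose $S(w)$ contained $S_1$ or $S_2$; since $w$ is also a subword of $(u_{r_j}^{\pm 1})$, the cyclic sequence $CS(r_j)$ would then contain $S_1$ or $S_2$. Applying Lemma~\ref{lem:useful} with $r = r_i$ and $s = r_j$ forces $CS(r_j')$ to contain a block of $CS(r_i')$, and iterating replaces the pair $(r_i, r_j)$ by the reduced pair $(r_i', r_j')$, and so on. For our family the hypotheses of Lemma~\ref{lem:useful} hold at every stage: each $r_i$ has first partial quotient $4$, all later quotients are at least $2$, the continued fraction lengths are $3$ for $r_0$ and $i+3$ for $r_i$ (hence pairwise distinct), and the second quotients ($3$ for $r_0$, $2$ for $i \geq 1$) obey the monotonicity demanded in hypothesis (iv); one checks that these features are preserved under the reduction $r \mapsto r'$, with the trailing block $[\dots,4,3]$ surfacing in the shorter slope so as to keep (iv) valid. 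The descent runs while both reduced lengths are at least $3$ and terminates when the shorter slope drops to length two — to $[2,3]$ when comparing $r_0$ with $r_j$, and to $[3,3]$ when $1 \le i < j$ — while the longer reduced slope still has first quotient $m' \in \{1,2\}$. By Lemma~\ref{lem:properties}(2) the $CS$-sequence of that longer slope uses only the values $m'$ and $m'+1$, with $m'+1$ never occurring in two consecutive positions; since the length-two shorter slope has decomposition blocks $(m+1)$ and $(m,m)$ with $m+1 > m'+1$ and $m = m'+1$, neither block can be a subsequence of that $CS$-sequence. This is the desired contradiction.

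With the claim in hand, $C(4)$ is immediate: if some $w \in R$, a cyclic permutation of $u_{r_i}^{\pm 1}$, were a product of fewer than four pieces with respect to $R$, then by the claim together with Lemma~\ref{lem:max_piece} each factor would be a piece with respect to the single relator $u_{r_i}$, contradicting the $C(4)$ part of Proposition~\ref{prop:small_cancellation}. For $T(4)$ the same control applies: any cancellation between two consecutive relators in a $T(4)$-configuration is realized along a common subword that is a single-relator piece, so the overlap combinatorics among relators of possibly different slopes is identical to that of a single relator, and the verification of $T(4)$ in Proposition~\ref{prop:small_cancellation} carries over. I expect the main obstacle to be precisely the bookkeeping of the third paragraph — confirming that the hypotheses of Lemma~\ref{lem:useful}, especially the monotonicity (iv), survive every step of the descent, and pinning down exactly where the distinct lengths of $r_i$ and $r_j$ obstruct the final containment.
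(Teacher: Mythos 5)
Your strategy coincides with the paper's own proof: reduce everything to the single-relator case by showing that a common subword of $(u_{r_i}^{\pm 1})$ and $(u_{r_j}^{\pm 1})$ with $i \neq j$ never contains $S_1(r_i)$ or $(\ell_1, S_2(r_i), \ell_2)$ in its $S$-sequence, prove this by iterating Lemma~\ref{lem:useful}, and then combine Lemma~\ref{lem:max_piece} with the single-relator $C(4)$ of Proposition~\ref{prop:small_cancellation}; your verification of hypotheses (i)--(iv) of Lemma~\ref{lem:useful} along the descent is also correct. The genuine gap is at the terminal step of the descent, and it comes from the fact that your key claim is \emph{not} symmetric in $i$ and $j$: it asserts that blocks of $CS(r_i)$ do not sit inside $CS(r_j)$, and $C(4)$ requires this for every ordered pair, so you must also treat $i>j$. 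When $i>j$ the descent terminates with the \emph{container} being the length-two slope: the invariant then says that the $CS$-sequence of $[m,3]$ (the reduction of $r_j$, with $m=2$ or $3$) contains $S_1$ or $S_2$ of the longer reduced slope $[m-1,n_1,\dots,n_k]$ (the reduction of $r_i$). Your contradiction --- that the blocks $(m+1)$ and $(m,m)$ of the length-two slope are too large, or too consecutive, to fit into the $CS$-sequence of the longer slope --- addresses exactly the opposite containment and says nothing here. Nor can a value count of your kind close this case, because the blocks of the longer slope consist of the values $m-1$ and $m$, and the value $m$ genuinely occurs in $CS([m,3])=\lp m+1,m,m,m+1,m,m\rp$. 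What works (and is Case~2 of the paper's proof) is different: by Lemma~\ref{lem:relation}(2), both $S_1$ and $S_2$ of $[m-1,n_1,\dots,n_k]$ contain a term equal to $m-1$, whereas $CS([m,3])$ consists only of $m$ and $m+1$, so neither block can occur there.

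Two lesser points. First, in passing from ``$(u_{r_j})$ has a subword corresponding to $S_1(r_i)$'' to ``$CS(r_j)$ contains $S_1(r_i)$,'' the outermost terms of the subword may be proper parts of longer syllables of $(u_{r_j})$, so a priori $CS(r_j)$ only contains $(d_1+s_1,s_2,\dots,s_{t-1},s_t+d_2)$ where $S_1(r_i)=(s_1,\dots,s_t)$; you need the short argument that $d_1=d_2=0$, using that $S_1(r_i)$ begins and ends with $5$ while $CS(r_j)$ has only $4$'s and $5$'s. Second, your justification of $T(4)$ is not really an argument: $T(4)$ concerns triples of elements of $R$ drawn from possibly three different slopes, and whether the three products cancel depends on initial and terminal letters, not on pieces, so nothing ``carries over'' from the one-relator proposition in the way you describe. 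The correct reason is simpler and is the one the paper gives: every element of $R$ is cyclically alternating, and a parity check on the generator types of the first and last letters shows that three cyclically alternating words can never form a cancelling cycle.
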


\begin{proof}
Since every element in $R$ is cyclically alternating,
$R$ clearly satisfies $T(4)$.
To show that $R$ satisfies $C(4)$,
we begin by setting some notation.
Recall from Lemma~\ref{lem:sequence} that
for every rational number $r$ with $0<r\le 1$,
$CS(r)$ has a decomposition
$\lp S_1,S_2,S_1,S_2 \rp$ depending on $r$.
For clarity,
we write $\lp S_1(r),S_2(r),S_1(r),S_2(r) \rp$
for this decomposition.
On the other hand,
if $r$ is a rational number with
$r=[m_1, \dots, m_k]$ with $k \ge 2$ and $(m_2, \dots, m_k) \in (\ZZ_{\ge 2})^{k-1}$,
then the symbol $r^{(n)}$ denotes the rational number
with continued fraction expansion
$[m_{n+1}-1,m_{n+2},\dots, m_k]$
for each $n=1, \dots, k-1$.

\medskip
\noindent{\bf Claim 1.}
{\it
For any two integers $i, j \ge 0$ with $i \neq j$,
the cyclic word $(u_{r_j})$ does not contain
a subword corresponding to $(S_1(r_i))$
or $(\ell_1, S_2(r_i), \ell_2)$ with $\ell_1, \ell_2 \in \ZZ_+$.
}

\begin{proof}[Proof of Claim 1]
Suppose on the contrary that there are some $i \neq j$
such that the cyclic word $(u_{r_j})$ contains
a subword corresponding to $S_1(r_i)$ or $(\ell_1, S_2(r_i), \ell_2)$.
We first show that this assumption implies that
$CS(r_j)$ contains $S_1(r_i)$ or $S_2(r_i)$ as a subsequence.
If $(u_{r_j})$ contains
a subword corresponding to $(\ell_1, S_2(r_i), \ell_2)$,
then clearly $CS(r_j)$ contains $S_2(r_i)$
as a subsequence.
So assume that $(u_{r_j})$ contains
a subword corresponding to $S_1(r_i)$.
Then $CS(r_j)$ contains $(d_1+s_1, s_2, \dots, s_{t-1}, s_t+d_2)$
as a subsequence,
where $S_1(r_i)=(s_1, s_2, \dots, s_t)$.
Since the continued fraction expansions of both $r_i$ and $r_j$
begin with $4$, we see that
$S_1(r_i)$ begins and ends with $5$ by Lemma~\ref{lem:sequence}(3)
and that
$CS(r_j)$ also consists of $4$ and $5$ by Lemma~\ref{lem:properties}(2).
Hence, we must have $d_1=d_2=0$ and therefore
$CS(r_j)$ contains $S_1(r_i)$ as a subsequence.
Thus we have proved that $CS(r_j)$ contains $S_1(r_i)$ or $S_2(r_i)$
as a subsequence.

Note that the lengths of
the continued fraction expansions of $r_i$ and $r_j$
are $i+3$ and $j+3$, respectively.
Hence we can apply Lemma~\ref{lem:useful}
successively to see that
$CS(r_j^{(n)})$ contains
$S_1(r_i^{(n)})$ or $S_2(r_i^{(n)})$
as a subsequence for every $n=1, \dots, \min\{i+1,j+1\}$.
Since $i \neq j$, there are two cases.

\medskip
\noindent{\bf Case 1.} $j > i \ge 0$.
Recall that $r_i$ is equal to $[4,3,3]$ or
$r_i=[4,2,(i-1)\langle 3 \rangle,4,3]$
according to whether $i=0$ or $i \ge 1$.
So we have $r_i^{(i+1)}=[m,3]$.
Here, $m=2$ if $i=0$, and $m=3$ otherwise.
Since $j>i$, we can observe that $r_j^{(i+1)}$ has a continued fraction expansion
of the form $[m-1,n_1,\dots, n_k]$, where $k \ge 2$
and each $n_t$ is $3$ or $4$.
Since $S_1(r_i^{(i+1)})=(m+1)$ and $CS(r_j^{(i+1)})$ consists of $m-1$ and $m$,
the cyclic sequence $CS(r_j^{(i+1)})$ cannot contain
$S_1(r_i^{(i+1)})=(m+1)$ as a subsequence.
Hence $CS(r_j^{(i+1)})$ must contain
$S_2(r_i^{(i+1)})$ as a subsequence.
But since $r_j^{(i+1)}=[m-1,n_1,\dots, n_k]$ with $n_1 \ge 3$,
$(m,m)$ does not occur in $CS(r_j^{(i+1)})$
by Lemma~\ref{lem:properties}(2).
Since $S_2(r_i^{(i+1)})=(m,m)$ by Lemma~\ref{lem:relation}(1),
this implies that $S_2(r_i^{(i+1)})$
cannot occur in $CS(r_j^{(i+1)})$, a contradiction.

\medskip
\noindent{\bf Case 2.} $i > j \ge 0$.
As in Case~1, we can observe that $r_j^{(j+1)}=[m,3]$,
where $m=2$ if $j=0$, and $m=3$ otherwise, and that
$r_i^{(j+1)}$ has a continued fraction expansion
of the form $[m-1,n_1,\dots, n_k]$, where $k \ge 2$
and each $n_t$ is $3$ or $4$.
Then both $S_1(r_i^{(j+1)})$ and $S_2(r_i^{(j+1)})$
contain a term $m-1$ by Lemma~\ref{lem:relation}(2).
But since $CS(r_j^{(j+1)})$ consists of only $m$ and $m+1$,
this is impossible.
\end{proof}

By Claim~1, we see that
the assertion in Lemma~\ref{lem:max_piece} holds
even if
$(u_r^{\pm 1})$ is replaced by $(u_{r_i}^{\pm 1})$ for any $i \ge 0$
and the symmetrized subset $R$ in the lemma is enlarged to be the
set in the current setting, namely,
$R$ is the symmetrized subset of
$F(a,b)$ generated by the set of relators
$\{u_{r_i} \svert i \ge 0\}$
of the group presentation
$G=\langle a, b \svert u_{r_0}=u_{r_1}=u_{r_2}=\cdots=1 \rangle$.
To be precise, the following hold.

\medskip
\noindent{\bf Claim 2.}
{\it
For each $i \ge 0$,
a subword $w$ of the cyclic word
$(u_{r_i}^{\pm 1})$ is a piece with respect to
the symmetrized subset $R$ in Lemma~\ref{lem:claim5}
if and only if
$S(w)$ contains neither $S_1(r_i)$ nor
$(\ell_1, S_2(r_i), \ell_2)$ with $\ell_1,\ell_2\in\ZZ_+$
as a subsequence.
}
\medskip

By using Claim~2, we can see,
as in \cite[Proof of Corollary~5.4]{lee_sakuma},
that each cyclic word
$(u_{r_i}^{\pm 1})$ is not a product of less than $4$ pieces
with respect to $R$.
Hence $R$ satisfies $C(4)$.
\end{proof}

\begin{lemma}
\label{lem:claim6}
Under the foregoing notation,
$u_s \neq_G 1$.
\end{lemma}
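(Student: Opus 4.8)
The goal is to show that $u_s \neq_G 1$, where $s=[3,3,4]$. The plan is to apply small cancellation theory, specifically Greendlinger's lemma (or Dehn's algorithm), which is available because Lemma~\ref{lem:claim5} establishes that the symmetrized set $R$ generated by $\{u_{r_i} \svert i \ge 0\}$ satisfies $C(4)$ and $T(4)$. Under these conditions, any nonempty cyclically reduced word representing the identity in $G$ must contain a subword that is a large piece of some relator; more precisely, for a $C(4)$-$T(4)$ presentation, a nontrivial relator in $F(a,b)$ that equals $1$ in $G$ must contain more than half of some $u_{r_i}^{\pm1}$, or at least a subword that fails to be a piece. First I would observe that $u_s$ is cyclically reduced and cyclically alternating, so $(u_s)$ is a genuine cyclic word to which the piece analysis of Claim~2 in Lemma~\ref{lem:claim5} applies.

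The key step is to compare the $CS$-sequence of $s$ against those of the relators $r_i$. We have computed $CS(s)=\lp 3,3,3,4,3,3,4,3,3,4,3,3,4,3,3,3,4,3,3,4,3,3,4,3,3,4 \rp$, whose terms are $3$ and $4$, whereas by Lemma~\ref{lem:properties}(2) each $CS(r_i)$ consists only of $4$ and $5$ (since every $r_i$ begins with $m_1=4$). Thus no subword of $(u_s^{\pm1})$ can coincide letter-by-letter with a subword of any $(u_{r_i}^{\pm1})$ of length exceeding the scale at which the $S$-sequences first disagree. The heart of the argument is therefore to show that any common subword of $(u_s)$ and some $(u_{r_i}^{\pm1})$ is so short that it is a piece with respect to $R$ — equivalently, by Claim~2, that $S(w)$ contains neither $S_1(r_i)$ nor $(\ell_1,S_2(r_i),\ell_2)$. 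I would extract this from the mismatch between the constituent integers $\{3,4\}$ of $CS(s)$ and $\{4,5\}$ of $CS(r_i)$: a long alternating overlap would force a run of $S$-values lying in $\{4,5\}$ inside $(u_s)$, but runs of $5$'s are entirely absent from $CS(s)$, bounding the overlap length.

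The upshot is that $u_s$, viewed as a word equal to $1$ in $G$ if it were, would have to contain a subword that is \emph{not} a piece of any relator, i.e.\ a subword accounting for strictly more than half of some $u_{r_i}$; the length comparison above rules this out, contradicting Greendlinger's lemma. The main obstacle I anticipate is bookkeeping: one must verify rigorously, using Lemma~\ref{lem:sequence} to identify the relevant $S_1(r_i)$ and $S_2(r_i)$ and Lemma~\ref{lem:max_piece}/Claim~2 to characterize pieces, that every maximal common alternating subword genuinely fails the non-piece criterion. This amounts to checking that $CS(s)$ cannot contain $S_1(r_i)$ or $(\ell_1,S_2(r_i),\ell_2)$ as a subsequence for any $i\ge 0$, which is a finite-pattern argument entirely analogous to Claim~1 of Lemma~\ref{lem:claim5} but comparing $s$ against each $r_i$ rather than comparing $r_i$ against $r_j$. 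Once that incompatibility of $CS$-sequences is in hand, Greendlinger's lemma immediately forces $u_s \neq_G 1$.
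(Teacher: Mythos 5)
Your proposal is correct and takes essentially the same route as the paper: the paper also invokes Lemma~\ref{lem:claim5} to get a Greendlinger-type conclusion for $C(4)$--$T(4)$ presentations (phrased there via a reduced van Kampen diagram being a $[4,4]$-map, citing \cite{lee_sakuma}), so that $u_s=_G 1$ would force $(u_s)$ to contain a subword of some $(u_{r_i}^{\pm 1})$ that is too large to be a piece. The contradiction is then exactly the one you describe: such a subword's $S$-sequence would have to contain $S_1(r_i)$ or $(\ell_1,S_2(r_i),\ell_2)$, hence a term $5$, while $CS(s)$ consists only of $3$'s and $4$'s.
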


\begin{proof}
Suppose on the contrary that $u_s =_G 1$.
Then there is a reduced van Kampen diagram $\Delta$ over
$G=\langle a, b \svert u_{r_0}=u_{r_1}=u_{r_2}=\cdots=1 \rangle$
such that
$(\phi(\partial \Delta)) \equiv (u_s)$
(see \cite{lyndon_schupp}).
Since $\Delta$ is a $[4,4]$-map by Lemma~\ref{lem:claim5},
$(\phi(\partial \Delta))$ contains a subword of some $(u_{r_i}^{\pm 1})$
which is a product of $3$ pieces with respect to
the symmetrized subset $R$ in Lemma~\ref{lem:claim5}
(see \cite[Section~6]{lee_sakuma}).
This implies that $CS(\phi(\partial \Delta))$ must contain a term $5$,
which is a contradiction to the fact $CS(\phi(\partial \Delta))=CS(u_s)=CS(s)$
consists of only $3$ and $4$.
\end{proof}

Lemma~\ref{lem:claim6} together with Lemma~\ref{lem:claim4}
shows that $\hat f$ is an epimorphism of $G$, but not an isomorphism of $G$.
Consequently, $G$ is non-Hopfian,
and the proof of Theorem~\ref{thm:simplest_case} is now completed.
\qed

\bibstyle{plain}

\bigskip

\end{document}